\renewcommand{\epsilon}{\varepsilon}
\DeclareMathOperator{\dvg}{div} \DeclareMathOperator{\spt}{spt}
\DeclareMathOperator{\dist}{dist} 
\DeclareMathOperator{\graph}{graph}
  \DeclareMathOperator{\proj}{proj}
  \DeclareMathOperator{\area}{Area}
  \DeclareMathOperator{\Id}{Id}
    \DeclareMathOperator{\Lip}{Lip}
\def\R{\mathbb{R}}
\def\d{\delta}
\def\a{\alpha}
\def\e{\epsilon}
\def\t{\tau}
\def\r{\rho}
\def\g{\gamma}
\def\Th{\Theta}
\def\b{\beta}
\def\s{\sigma}
\def\o{\omega}
\def\l{\lambda}
\def\k{\kappa}
\def\H{\mathcal{H}}
\def\mass{\underline{\underline{M}}}
\def\var{\underline{\underline{\text{v}}}}
\def\mv{\mu_{{V}}}
\def\dmv{\,d \mu_{{V}}}
\def\ov{\overline}
\def\res{\hbox{ {\vrule height .25cm}{\leaders\hrule\hskip.2cm}}\hskip5.0\mu}
\def\loc{\text{loc}}
\newtheorem{theorem}{Theorem}[section]
\newtheorem{lemma}[theorem]{Lemma}
\newtheorem{remark}[theorem]{Remark}
\newtheorem{definition}[theorem]{Definition}
\newtheorem{claim}[theorem]{Claim}
\newtheoremstyle{TheoremNum}
        {\topsep}{\topsep}              
        {\itshape}                      
        {}                              
        {\bfseries}                     
        {.}                             
        { }                             
        {\thmname{#1}\thmnote{ \bfseries #3}}
    \theoremstyle{TheoremNum}
\title{An Allard type regularity theorem for varifolds with H\"older continuous generalized normal}
\author{Theodora Bourni  \and Alexander Volkmann}
\date{}
\begin{document}
\maketitle

\begin{abstract}
We prove that  Allard's regularity theorem holds for rectifiable $n$-di- mensional varifolds $V$ assuming a weaker condition on the first variation. This, in the special case when $V$ is a smooth manifold translates to the following: If $\o_n^{-1}\r^{-n}\area(V\cap B_\r(x))$ is sufficiently close to 1 and the unit normal of $V$ satisfies a $C^{0,\a}$ estimate, then $V\cap B_{\r/2}(x)$ is the graph of a $C^{1,\a}$ function with estimates. Furthermore, a similar boundary regularity theorem is true.

\end{abstract}

\section{Introduction}\label{intro}

In 1972 Allard \cite{al1} proved a remarkable regularity theorem for rectifiable $n$-varifolds $V=\var(M,\theta)$ in $\R^{n+k}$ (cf. Theorem \ref{allard}). His theorem roughly says that if the generalized mean curvature of $V$ is in  $L^p_\loc(\mu_V)$, $p>n$, if $\theta\ge 1$ $\mv$-a.e. and if $\o_n\r^{-n}\mv( B_\r(x))$  is sufficiently close to 1 then $\spt V\cap B_{\r/2}(x)$ is a graph of a $C^{1,\a}$ function with estimates, where $\a=1- n/p$, see below for precise definitions.

The purpose of this work is to weaken the condition on the generalized mean curvature of $V$ (cf. Theorem \ref{main}). In particular we show that Allard's regularity theorem still holds if instead we assume that $V$ has \emph{generalized normal of class $C^{0,\alpha}$} in the following sense.  
\begin{definition}\label{C0a-normal}
Let $U$ be an open subset of $\R^{n+k}$ and let \em{$V=\var(M,\theta)$} be a rectifiable $n$-varifold in $U$.
We say that $V$ has {\em generalized normal  of class $C^{0,\alpha}$ in $U$ } if there exists a $K\geq 0$ such that for all $B_\r(x) \subset U$ and all $X \in C_c^1(B_\r(x),\R^{n+k})$
\begin{equation*}\label{varcond}
\d V(X)\le K\r^\a\int_M \|d^MX\|\dmv,
\tag{$\star$}\end{equation*}
where $d^MX:=DX\circ P_{TM}$, $P_{T_xM}$ denoting the orthogonal projection matrix of $\R^{n+k}$ onto $T_xM$, the approximate tangent space of $M$ at $x$, and where for a matrix $A=(a_{ij})$, $\|A\|$ is the euclidean operator norm, i.e. $\|A\|=\sup_{|v|=1}|Av|$.
\end{definition}

The paper is organized as follows. Firstly, we fix notation (mainly following the notation of \cite{si1}) and specify the setting we will be working with, and then we give the exact statement of our main theorem (Theorem~\ref{main}).
Afterwards, we motivate condition \eqref{varcond} by showing that it is satisfied by smooth manifolds and is implied by the hypotheses of Allard's regularity theorem.

In section~\ref{monotonicity section} we prove a monotonicity formula and a Poincar\'e inequality for varifolds with generalized normal of class $C^{0,\a}$, which is a fundamental tool in the proof of the main theorem 
(Theorem~\ref{main}). The proof is given in section~\ref{mainsection}. In section~\ref{bdry} we state the boundary regularity analogue (Theorem \ref{main}), which is a consequence of Theorem~\ref{main} and Allard's boundary regularity theorem \cite{al2} (see also \cite{bourni2}).
In section~\ref{GV} we extend the notion of generalized normal of class $C^{0,\alpha}$ to the class of general varifolds, and prove compactness and rectifiability  theorems.
In section~\ref{applications} we apply Theorem~\ref{main} to solutions of the prescribed mean curvature equation and get regularity estimates for graphs of such solutions.

\bigskip
\textbf{Acknowledgements:}
{We would like to thank Ulrich Menne for useful conversations.}

\subsection*{Notation}

Throughout this paper $U$ will be an open subset of $\R^{n+k}$ and $V=\var(M, \theta)$ will denote an n-rectifiable varifold in $U$, so that $M$ is a countably n-rectifiable $\H^n$-measurable subset of $U$ and $\theta$, the {\em multiplicity function}, is a positive and locally $\H^n$ integrable function on $M$. The associated Radon measure will be denoted by $\mv:=\H^n\res\theta$, so that for any $\H^n$-measurable $A\subset \R^{n+k}$ we have
\[\mv(A)=\int_{A\cap M}\theta\,d\H^n.\]
The {\em first variation} of  $V$ with respect to $X\in C^1_c(U,\R^{n+k})$ is given by
\[\d V(X)=\int_M\dvg_MX\dmv.\]
We say that $V$ has {\em generalized mean curvature }$\vec H$ in $U$ if 
\begin{equation}\label{H}
\d V(X)=\int_M\dvg_MX\dmv=-\int_MX\cdot \vec H\dmv\,\,,\,\,\,\forall X\in C^1_c(U),
\end{equation}
where $\vec H$ is a locally $\mv$-integrable function on $M\cap U$ with values in $\R^{n+k}$. 
We remark that using the Riesz representation theorem such an $\vec H$ exists if the total variation $\|\d V\|$ is a Radon measure in $U$ and moreover $\|\d V\|$ is absolutely continuous with respect to $\mv$ (see \cite{si1} for details).

We now have all the necessary language to state our theorem. We will use the following hypotheses
\begin{equation}\label{hyp}
\left.\begin{split} 1\le \theta\,\,\, \mv\text{-a.e. ,  }0\in\spt V\,\,,&\,B_\r(0)\subset U\\
\o_n^{-1}\r^{-n}\mv(B_\r(0))\le& 1+\d.
\end{split}\,\,\,\,\right\}\tag{h}
\end{equation}

\begin{theorem}\label{main} There exist $\d=\d(n,k,\a)$ and $\gamma=\gamma(n,k,\a)$ $\in (0,1)$ such that if  \emph{$V=\var(M,\theta)$}  satisfies hypotheses \eqref{hyp} and has generalized normal of class $C^{0,\a}$ in $U$ in the sense of {\em{Definition} \ref{C0a-normal}}, with $K\r^\a\le \d$, then $\spt V\cap B_{\gamma\r}(0)$ is a graph of a $C^{1, \a}$ function with scaling invariant $C^{1, \a}$ estimates depending only on $n,k,\a,\d$.
\end{theorem}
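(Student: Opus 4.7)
\medskip

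\noindent\textbf{Proof plan.} The overall strategy is to mimic Allard's original proof of his regularity theorem, replacing the $L^p$ control on $\vec H$ by the weaker first variation hypothesis \eqref{varcond}. The three building blocks are: a monotonicity formula, a Lipschitz graph approximation coming from a Poincar\'e inequality, and an excess decay lemma. The first two are supplied in Section~\ref{monotonicity section}; the heart of the argument is the third.

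\smallskip

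First, I would fix an orthonormal frame and, using the monotonicity formula together with hypothesis \eqref{hyp}, show that $\omega_n^{-1}r^{-n}\mu_V(B_r(x)) \le 1+C\delta$ holds uniformly for all $B_r(x) \subset B_{\rho/2}(0)$ with density $\ge 1$, so that upper and lower density bounds are preserved under rescaling. Since the hypothesis \eqref{varcond} is scaling invariant with $K\rho^\alpha$ as the natural small parameter, one reduces (after rescaling) to the case $\rho = 1$, $K \le \delta$. Define the tilt-excess
\[
E(x,r,T) := r^{-n}\int_{B_r(x)} \|P_{T_yM} - P_T\|^2\, d\mu_V(y)
\]
for an $n$-plane $T$, and $E(x,r) := \inf_T E(x,r,T)$.

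\smallskip

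The key step is the \emph{excess decay lemma}: there exist $\theta\in(0,1)$ and $\eta>0$ depending only on $n,k,\a$ such that if $E(0,1) + K^2 \le \eta$, then
\[
E(0,\theta) \le \tfrac{1}{2}\,\theta^{2\alpha}\, E(0,1) + C\, K^2.
\]
To prove this I would run the standard blow-up/harmonic approximation scheme. Let $T$ be an optimal plane. Using the Caccioppoli-type inequality (which follows from testing \eqref{varcond} against cut-off vector fields normal to $T$) together with the Poincar\'e inequality of Section~\ref{monotonicity section}, one produces a Lipschitz function $f: T\cap B_{3/4} \to T^\perp$ whose graph coincides with $\spt V$ on a set of large measure, with $\mathrm{Lip}(f) \le C\sqrt{E(0,1)}$ and $\mu_V\bigl(B_{3/4}\setminus \graph f\bigr) + \mathcal{H}^n\bigl((T\cap B_{3/4})\setminus \mathrm{dom}(f)\bigr) \le C(E(0,1) + K^2)$. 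Plugging test vector fields of the form $X = \zeta(y)\,\nabla\phi(P_T y)$ into \eqref{varcond} and expanding to leading order in $Df$, one obtains that $f$ is approximately harmonic: for any $\phi \in C^1_c(T\cap B_{1/2})$,
\[
\left|\int Df\cdot D\phi\,dy\right| \le C\bigl(E(0,1) + K\bigr)\|\phi\|_{C^1}.
\]
A standard harmonic approximation argument then produces a harmonic $u$ with $\int_{B_{1/2}} |f-u|^2 \le C(E(0,1)+K^2)$, and the sharp $C^{1,\alpha}$ estimates for $u$ transfer back to $f$ to yield the excess decay displayed above.

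\smallskip

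Finally, I would iterate: setting $E_j := E(0,\theta^j)$, the decay inequality combined with $K(\theta^j)^\alpha \le \theta^{j\alpha} K$ gives, by the standard Campanato-type iteration, $E_j \le C\,\theta^{2j\alpha}(E_0 + K^2)$ for all $j$. This says that the tilt of the approximate tangent planes is H\"older-$\alpha$ continuous at $0$, with scaling-invariant estimates. Since the center $0$ can be replaced by any nearby point $x\in\spt V$ with density $\ge 1$ (by the monotonicity-based propagation in the first step), one concludes that $\spt V \cap B_{\gamma}(0)$ is the graph of a $C^{1,\alpha}$ function, with the desired estimates following from the $j\to\infty$ limit of the iteration.

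\smallskip

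\noindent\emph{Main obstacle.} The delicate point is the harmonic approximation step, and in particular turning \eqref{varcond} into an almost-harmonicity estimate for $f$ with the correct error of order $K$ (not $K^{1/2}$). One must test $\delta V$ against vector fields that are tangential to $T$ up to an error controlled by $Df$, carefully account for the bad set on which $V$ is not a graph, and balance the contributions of $\|d^M X\|$ and the tilt, so as to extract an inequality that admits a linear (rather than quadratic) absorption of the $K$ term. The sharp dependence on $\alpha$ in the decay exponent relies on this.
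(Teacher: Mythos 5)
Your proposal follows essentially the same route as the paper: Allard's scheme run directly on condition \eqref{varcond}, i.e.\ the monotonicity formula, a Lipschitz approximation, a Caccioppoli-type tilt-excess-and-height lemma, harmonic approximation, and iteration of a tilt-excess decay in which $(K\rho^\alpha)^2$ is carried along as part of the excess (the paper's $E_*(\xi,\rho,T)=\max\{E(\xi,\rho,T),\varepsilon^{-1}(K\rho^\alpha)^2\}$, whose exact scaling under $\rho\mapsto\eta\rho$ does the Campanato iteration for you). The one slip is your test field $X=\zeta(y)\,\nabla\phi(P_Ty)$ for the almost-harmonicity of $f$: being tangential to $T$ it gives, to leading order, only $\int\operatorname{div}_T(\zeta\nabla\phi)=0$ plus $O(E)$ and hence no information on $\Delta f$; the correct (and the paper's) choice is the normal-direction field $X=\zeta e_{n+j}$, which yields $\int_M\nabla^Mx^{n+j}\cdot\nabla^M\zeta\,d\mu_V\le K\rho^\alpha\int_M|\nabla^M\zeta|\,d\mu_V$ and thus exactly the linear-in-$K\rho^\alpha$ error you identify as the main obstacle.
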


For convenience we also state Allard's regularity theorem. 
\begin{theorem}[Allard's Regularity Theorem]\label{allard} For $p>n$, there exist $\d=\d(n,k,p)$ and $\gamma=\gamma(n,k,p)$ $\in (0,1)$ such that if \emph{$V=\var(M,\theta)$}  satisfies hypotheses \eqref{hyp}  and has generalized mean curvature $\vec H$ in $U$ (see \eqref{H}) satisfying
\[\left(\int_{B_\r(0)}|\vec H|^p\dmv\right)^\frac1p\r^{1-\frac np}\le \d \]
then $\spt V\cap B_{\gamma\r}(0)$ is a graph of a $C^{1, 1-\frac np}$ function with scaling invariant $C^{1, 1-\frac np}$~estimates depending only on $n,k,p,\d$.
\end{theorem}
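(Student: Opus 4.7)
The plan is to run the classical Allard scheme (tilt-excess control, Lipschitz graph approximation, harmonic approximation, excess decay, Campanato iteration) with the monotonicity formula and Poincar\'e inequality of Section~\ref{monotonicity section} playing the role of the corresponding results for varifolds with $L^p$ mean curvature. The only structural change is in the harmonic-approximation step: errors from testing the first variation are now controlled by $K\rho^\alpha$ times an $L^1(\mu_V)$-norm of $d^M X$, and this dual-Sobolev scaling is precisely what produces the exponent $\alpha$ in the final estimate.

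First I would upgrade the one-sided volume bound \eqref{hyp} to two-sided bounds on $\mu_V(B_r(y))$ for $y\in\spt V\cap B_{\rho/2}(0)$ using monotonicity, and bound the tilt-excess $E(\rho):=\rho^{-n}\int_{B_\rho}\|P_{T_xM}-P_\pi\|^2\,d\mu_V$ (with $\pi$ the optimal $n$-plane through $0$) by a Caccioppoli-type computation: test $(\star)$ with $X(x)=\zeta(x)^2(x-P_\pi x)$, where $\zeta$ is a standard cutoff, and control the resulting RHS by $K\rho^\alpha$ and the excess itself. A standard maximal-function/good-set construction, combined with the Poincar\'e inequality of Section~\ref{monotonicity section}, then gives, after rotating so that $\pi=\mathbb R^n\times\{0\}$, a Lipschitz map $f\colon B_{\rho/2}^n\to\mathbb R^k$ whose graph coincides with $\spt V$ on a set of large $\mu_V$-measure and with $\lip f\lesssim E(\rho)^{1/2}$.

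The key step is the harmonic approximation. For $\phi\in C^1_c(B_{\rho/2}^n,\mathbb R^k)$, test $(\star)$ with an ambient extension $X$ of $(0,\phi)$ cut off in a neighborhood of $\graph f$. Expanding $\dvg_M X$ in graph coordinates on the good set and absorbing the contributions of its complement (controlled by the Lipschitz-approximation lemma) yields an inequality of the form
\begin{equation*}
\Bigl|\int_{B_{\rho/2}^n}\nabla f\cdot\nabla\phi\,dx\Bigr|\le C\,K\rho^\alpha\int_{B_{\rho/2}^n}|\nabla\phi|\,dx+(\text{lower order in }E\text{ and }\delta),
\end{equation*}
so that $f$ is an approximate weak solution of $\Delta f=0$. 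The standard harmonic-approximation lemma then yields a harmonic $h$ on $B_{\rho/2}^n$ with $\rho^{-n-2}\int|f-h|^2\,dx\le C(K^2\rho^{2\alpha}+E(\rho)^{1+\epsilon})$. Combining this with the interior decay $E_h(\theta\rho)\le C\theta^2 E_h(\rho)$ for harmonic functions gives, after fixing $\theta$ small so that $C\theta^2\le\tfrac12\theta^{2\alpha}$, the scale-invariant estimate $E(\theta\rho)\le\tfrac12\theta^{2\alpha}E(\rho)+CK^2\rho^{2\alpha}$, which iterates to $E(r)\le C(r/\rho)^{2\alpha}(E(\rho)+K^2\rho^{2\alpha})$ for all $r\le\gamma\rho$. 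A standard Campanato-type argument then upgrades this to a $C^{1,\alpha}$ graph representation of $\spt V\cap B_{\gamma\rho}(0)$ with scale-invariant estimates.

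The principal obstacle is making the harmonic-approximation step precise: because $(\star)$ only furnishes dual control by $\|d^M X\|_{L^1(\mu_V)}$, the ambient test field $X$ must be chosen so that its tangential derivatives are bounded by $|\nabla_{\mathbb R^n}\phi|$ on the good set while its contribution over the bad set (where $\spt V\ne\graph f$) remains absorbable into the excess. Once this is arranged, the rest of the proof is a routine adaptation of Allard's iteration, with the $K\rho^\alpha$ scaling automatically producing the Hölder exponent $\alpha$.
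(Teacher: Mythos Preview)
Your proposal is a proof sketch for the wrong theorem. The statement in question is Theorem~\ref{allard}, Allard's classical regularity theorem, whose hypothesis is an $L^p$ bound on the generalized mean curvature $\vec H$, with $p>n$, and whose conclusion is $C^{1,1-n/p}$ regularity. Throughout your argument, however, you assume and use condition~\eqref{varcond} (the ``generalized normal of class $C^{0,\alpha}$'' hypothesis with constant $K$), you repeatedly test $(\star)$, and you track the quantity $K\rho^\alpha$; in other words, you have written an outline of a proof of Theorem~\ref{main}, not of Theorem~\ref{allard}. Nothing in your write-up derives \eqref{varcond} from the $L^p$ mean curvature bound, nor does it explain why the final H\"older exponent should be $\alpha=1-n/p$.

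Note also that the paper does not prove Theorem~\ref{allard} at all: it is merely stated ``for convenience'' as the classical result of \cite{al1}, with no proof given. So there is no ``paper's own proof'' to compare to for this statement. If your intention was to prove Theorem~\ref{main}, then your outline is essentially the same approach the paper takes in Section~\ref{mainsection}: establish the Lipschitz approximation (Lemma~\ref{LipApprox}) and the tilt-excess decay (Lemma~\ref{TEHL}, Theorem~\ref{TED thm}) using \eqref{varcond} in place of the first variation identity, and then follow Allard's iteration verbatim. But as a proof of Theorem~\ref{allard} as stated, your argument does not address the given hypotheses.
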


\subsection*{The decay condition \eqref{varcond}  when $V$ is a smooth manifold}

  Before getting to the proof of Theorem \ref{main} we want to motivate the decay condition \eqref{varcond} of the first variation by showing that it holds  when $V$ is a smooth manifold. We first do that in the special case when $V$ is actually given by the graph of a smooth function.
  
  \smallskip 
 \noindent \textbf{Smooth graphs}

Let  $M=\graph u\subset\R^{n+1}$ be a graph over $B_\r^n(0) \subset \R^n$ with $u(0)=Du(0)=0$. 
The downward normal to the graph
is given by
 \[\nu=\frac{1}{\sqrt{1+|Du|^2}}\left(D_1u, D_2u, \dots D_nu, -1\right).\]
Now let $X(x,x_{n+1})=v(x) e_{n+1}$, where $v\in C^\infty_0(B_\r^n(0))$.
Then we have
\[\begin{split} 
\dvg_M (X) = - D_jv \nu^j \nu_{n+1} = \frac{ Dv \cdot Du }{ 1+|Du|^2 } 
\end{split}\]
and
\[ \| d^MX \| = \frac{\sqrt{(1+|Du|^2)|Dv|^2-(Du\cdot Dv)^2}}{\sqrt{1+|Du|^2}} \geq  \frac{|Dv|}{\sqrt{1+|Du|^2}},\]
where we also view the functions $v$ and $u$ as functions on $B_\r^n(0) \times \R $, that are independent of the $x_{n+1}$-variable.
Since $\H^n\llcorner M = \mathcal{L}^n \llcorner \sqrt{1+ |Du|^2}$, we conclude that
\[\begin{split}
\delta V(X) & =\int_M  \dvg_MX \,d\H^n =  \int_{B_\r^n(0)}  \frac{Dv  \cdot Du }{ \sqrt{ 1+|Du|^2 } } \,d\mathcal L^n \\
&=  \int_{B_\r^n(0)} \frac{ Dv }{ \sqrt{ 1+|Du|^2 } }  \cdot ( Du - Du(0) ) \,d\mathcal L^n \\
&  \leq \int_M \|d^MX\| \, | Du  - Du(0) | \,d\mathcal H^n\\
& \leq [Du ]_{\alpha, B_\r^n(0)}\, \rho^\alpha \int_M \|d^MX \|  \,d\mathcal H^n.
\end{split}\]

\noindent\textbf{Smooth manifolds}
  
Let $M$ be an $n$-dimensional manifold in $\R^{n+k}$ and let $\{\t_i\}_{i=1}^n$  be a local orthonormal frame of $M$ about $0\in M$. Then for $X\in C^1_c(B_\r(0), \R^{n+k})$ we have  (for $\r$ small enough)

\[\begin{split}\d M(X)&=\int_M\dvg_MX\,d\H^n= \sum_{i=1}^{n}\int_M (D_{\t_i} X) \cdot \t_i\,d\H^n\\
&= \sum_{i=1}^{n}\int_M (D_{\t_i} X) \cdot (\t_i-\t_i(0))\,d\H^n 
	+ \sum_{i=1}^{n}\int_M  (D_{\t_i} X) \cdot \t_i(0)\,d\H^n\\
&= \sum_{i=1}^{n}\int_M  (D_{\t_i} X) \cdot (\t_i-\t_i(0))\,d\H^n\\
&\leq K \r^\alpha \,\int_M  \| d^M X \| \,d\H^n,
\end{split}\]
where $K$ is a constant that depends on a suitably defined local $\a$-H\"older seminorm of the normal $\nu$ to $M$ in $ M \cap B_\r(0)$.

\subsection*{Generalized mean curvature in $L^p$ implies generalized normal of class $C^{0,\alpha}$ }
In this section we show that if $V=\var(M,\theta)$ satisfies conditions \eqref{hyp}, with any $\d$ (not necessarily small) and some $\r \in (0,1]$, and has generalized mean curvature such that 
\[\left(\frac{1}{\omega_n}\int_{B_\r(0)}|\vec H|^p\dmv\right)^\frac1p\r^{1-\frac np}\le \Gamma \left(1-\frac{n}{p} \right)\]
for some $p>n$ and for some $\Gamma \in [0,1/2]$, then $V$ satisfies the decay condition \eqref{varcond} in $B_{\g\r}(0)$ for some $\g = \g(n,k,p,\d) \in (0,1)$ and with $\a = 1-n/p$.

Let $X \in C_c^1(B_{\g\r}(0),\R^{n+k})$. We estimate with H\"older's inequality
\begin{align}\label{H in Lp implies nu in C0alpha}
\delta V(X) 
&\leq \int_M |\vec H||X|\dmv \nonumber\\
&\leq \left( \int_{B_{\g\r}(0)} |\vec H|^p \dmv \right)^\frac{1}{p} \mu_V(B_{\g\r}(0))^{(1-\frac{n}{p})\frac{1}{n}}  \left( \int_M |X|^\frac{n}{n-1} \dmv \right)^\frac{n-1}{n} \nonumber\\
&\leq c(n,p, \d)
  \g^\a  \left( \int_M |X|^\frac{n}{n-1} \dmv \right)^\frac{n-1}{n},
\end{align}
where we have used the monotonicity formula for the area ratios (see \cite[Theorem 17.6, Remark 17.9]{si1}).
Together with the Michael-Simon inequality (see \cite[Theorem 18.6]{si1}) applied to the functions $X^i$, $i=1,...,n+k$, we obtain
\[\begin{split}
\left( \int_M |X|^\frac{n}{n-1}\dmv \right)^\frac{n-1}{n}
& \leq c(n,k) \left( \int_M \|d^MX \| +|X||\vec H| \dmv \right) \\
& \leq c(n,k) \int_M \|d^MX \|\dmv \\
&\quad + c(n,k,p,\d)\gamma^\alpha  \left( \int_M |X|^\frac{n}{n-1} \dmv \right)^\frac{n-1}{n}.
\end{split}\]
 Hence, for $\g \leq \g_0(n,k,p,\d)$ we obtain upon absorbing 
\[ \left( \int_M |X|^\frac{n}{n-1}\dmv \right)^\frac{n-1}{n} \leq c(n,k,p,\d) \int_M \| d^MX \|\dmv . \]
Inserting this into \eqref{H in Lp implies nu in C0alpha} we infer
\[ \delta V(X)\leq c(n,k,p,\d) \g^\a\int_M \| d^MX \|\dmv  ,\] 
which is exactly the decay condition \eqref{varcond} with $K=c(n,k,p,\d)$.

\section{Monotonicity formula}\label{monotonicity section}

In this section we show that a varifold  $V=\var(M,\theta)$, which satisfies the decay condition  \eqref{varcond}, satisfies some nice monotonicity properties, similar to those for varifolds with generalized mean curvature satisfying an $L^p$ estimate, $p>n$, (cf.  \cite[Chapter 4]{si1}).

\begin{lemma}\label{monotonicity}
Assume that \emph{$V=\var(M,\theta)$}  has generalized normal of class $C^{0,\a}$ in $U$ in the sense of {\em{Definition} \ref{C0a-normal}}. Then for any $x\in U$ and all  $0<\s<\r$, with $\r$ such that $\ov B_\r(x)\subset U$ and $K\r^\a\le 1/2$, with $K$ as in condition \eqref{varcond}, we have the following monotonicity formulae.

\begin{itemize}
\item[{\em (i)}]
$e^{K_0\r^\a}\r^{-n}\mv(B_\r(x))\ge e^{K_0\s^\a}\s^{-n}\mv(B_\s(x))+ \frac12\int_{B_\r(x)\setminus B_\s(x)} \frac{|(y-x)^\perp|^2}{r^{n+2}}\dmv$,
\item[{\em (ii)}]
$e^{-K_0\r^\a}\r^{-n}\mv(B_\r(x))\le e^{-K_0\s^\a}\s^{-n}\mv(B_\s(x))+ 2\int_{B_\r(x)\setminus B_\s(x)} \frac{|(y-x)^\perp|^2}{r^{n+2}}\dmv$,
\end{itemize}
where $r=r(y)=|y-x|$, $(y-x)^\perp= P_{N_xM}(y-x)$, with $P_{N_xM}$ denoting the orthogonal projection matrix of $\R^{n+k}$ onto $N_xM=(T_xM)^\perp$,  and $K_0=\frac{n+1}{\a}2K$.

\end{lemma}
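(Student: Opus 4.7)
The plan is to test the first-variation inequality \eqref{varcond} against a radial cutoff vector field $X(y) = \gamma(r)(y-x)$ (with $r = |y-x|$), where $\gamma$ is a smooth non-negative, non-increasing cutoff supported in $[0,\rho)$, and to convert the resulting integral estimate into a differential inequality for the density ratio $I(\rho) := \rho^{-n}\int_M \gamma(r)\,d\mu_V$. A direct computation using $|\nabla^M r|^2 = 1 - |(y-x)^\perp|^2/r^2$ gives
\[ \dvg_M X = n\gamma(r) + r\gamma'(r) + (-\gamma'(r))\,\frac{|(y-x)^\perp|^2}{r}, \]
while diagonalising $DX = \gamma(r)\,\Id + (\gamma'(r)/r)(y-x)(y-x)^T$ (whose eigenvalues are $\gamma(r)$ and $\gamma(r)+r\gamma'(r)$) yields the pointwise estimate $\|d^{M}X\| \le \gamma(r) - r\gamma'(r)$. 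Applying \eqref{varcond} to both $X$ and $-X$ produces the two-sided bound
\[ \Bigl|\int_M \dvg_M X\,d\mu_V\Bigr| \le K\rho^{\alpha}\int_M [\gamma(r) - r\gamma'(r)]\,d\mu_V. \]

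Specialising to $\gamma(r) = h(r/\rho)$ for a smooth non-increasing $h$ supported in $[0,1]$ and differentiating under the integral yields the identities
\[ -\rho^{n+1}I'(\rho) = \int_M [n\,h(r/\rho) + (r/\rho)\,h'(r/\rho)]\,d\mu_V, \qquad \int_M r(-\gamma')\,d\mu_V = n\rho^n I(\rho) + \rho^{n+1}I'(\rho). \]
Substituting these into the two-sided inequality above and collecting terms gives
\[ (1 + K\rho^{\alpha})I'(\rho) \ge -K(n+1)\rho^{\alpha-1} I(\rho) + \mathcal P(\rho), \qquad (1 - K\rho^{\alpha})I'(\rho) \le K(n+1)\rho^{\alpha-1} I(\rho) + \mathcal P(\rho), \]
where $\mathcal P(\rho) := \rho^{-n-1}\int_M (-\gamma'(r))\,|(y-x)^\perp|^{2}/r\,d\mu_V \ge 0$. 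Since $K\rho^{\alpha}\le 1/2$ forces $1\pm K\rho^{\alpha}\in[1/2,3/2]$, multiplying by the integrating factor $e^{\pm K_{0}\rho^{\alpha}}$ with $K_0 = 2K(n+1)/\alpha$ absorbs the drift $K(n+1)\rho^{\alpha-1}I(\rho)$ cleanly and leaves
\[ \frac{d}{d\rho}[e^{K_{0}\rho^{\alpha}} I(\rho)] \ge \tfrac{1}{2}\, e^{K_{0}\rho^{\alpha}}\mathcal P(\rho), \qquad \frac{d}{d\rho}[e^{-K_{0}\rho^{\alpha}} I(\rho)] \le 2\, e^{-K_{0}\rho^{\alpha}}\mathcal P(\rho). \]

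To conclude, I would let $h \to \mathbf 1_{[0,1]}$ through a piecewise-linear family $h_\epsilon$, so that $I(\rho) \to \rho^{-n}\mu_V(B_\rho(x))$; a direct Fubini/change-of-variable computation then shows $\int_\sigma^\rho \mathcal P_{h_\epsilon}(t)\,dt \to \int_{B_\rho(x)\setminus B_\sigma(x)}|(y-x)^\perp|^2/r^{n+2}\,d\mu_V$, with the residual factors $e^{\pm K_0 r^\alpha}$ inside the annular integral bounded by $1$ in the direction required for each of (i), (ii). Integrating the two displayed differential inequalities over $[\sigma,\rho]$ then yields both claims. The main technical obstacle is the extraneous term $K\rho^{\alpha}\int r(-\gamma')\,d\mu_V$ generated by the radial eigenvalue of $DX$---it has no counterpart in Allard's classical monotonicity and must be absorbed via the identity $\int r(-\gamma')\,d\mu_V = n\rho^n I + \rho^{n+1}I'$. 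This absorption is precisely what forces the constant $K_0 = 2K(n+1)/\alpha$, and, via the $(1\mp K\rho^{\alpha})^{-1}$ factor in the two-sided inequality, degrades the coefficient in front of the perpendicular integral from the classical $1$ to the $\tfrac{1}{2}$ and $2$ appearing in (i) and (ii).
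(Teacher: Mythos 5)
Your proposal is correct and follows essentially the same route as the paper: the same radial test field $X=\gamma(r)(y-x)$ applied to both $\pm X$ in \eqref{varcond}, the same bound $\|d^MX\|\le\gamma(r)-r\gamma'(r)$, the same rescaling $\gamma=h(r/\rho)$ leading to the two differential inequalities, the same integrating factor $e^{\pm K_0\rho^\alpha}$ with $K_0=2K(n+1)/\alpha$, and the same use of $K\rho^\alpha\le 1/2$ to produce the constants $1/2$ and $2$. The limiting step $h\to\mathbf 1_{[0,1]}$ and the integration over $[\sigma,\rho]$ also match the paper's argument.
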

\begin{proof}
Without loss of generality we assume that $x=0$ and we write $B_\r=B_\r(0)$. We will use inequality \eqref{varcond} with  the vector field
\[X(x)=\gamma(r) x, \]
where $r=r(x)=|x|$ and $\gamma:\R\to[0,1]$ is a smooth decreasing function such that $\gamma(r)=0$, for $r\ge \r$.
We have
\[ \|d^MX\| \leq \left\| \g(r) \Id + r \g'(r) \left( \frac{x}{r} \otimes \frac{x}{r} \right) \right\|
\leq  \g(r) - r  \g'(r)\]
and thus
\[\begin{split}\int_M\|d^MX\|\dmv&\le  \int_M\gamma(r)\dmv-\int_M r\gamma'(r) \dmv.\end{split}\]
Furthermore,
\[\dvg_M X=n\gamma(r)+r\gamma'(r)\left(1-|D^\perp r|^2\right),\]
where $D^\perp r=\proj_{N_xM}(Dr)$ and $\spt X\subset B_\r$. Hence by plugging the vector field $X$ in \eqref{varcond} we get
\begin{equation*}\begin{split}\int_M n\gamma(r)\dmv+\int_Mr\gamma'(r)\dmv\le 
&\int_Mr\gamma'(r){|D^\perp r|^2}\dmv\\
&+ K\r^{\a}\left(\int_M\gamma(r)\dmv-\int_M r\gamma'(r) \dmv \right).\end{split}
\end{equation*}
We work now as in the $\vec H\in L^p$ case (see \cite[\S 17]{si1}) by setting $\gamma(r)=\phi(r/\rho)$ where $\phi:\R\to [0,1]$ is a smooth function such that $\phi(t)=0$ for $t\ge1$ and $\phi'(t)\le 0$ for all $t$. Since $r\gamma'(r)=r\r^{-1}\phi'(r/\rho)=-\rho\frac{\partial}{\partial\rho}(\phi(r/\rho))$ and after multiplying by $-\rho^{-n-1}$, we get

\[\begin{split}\frac{\partial}{\partial\r}\left(\r^{-n}\int_M\phi(r/\rho)\dmv\right)\ge & \r^{-n}\frac{\partial}{\partial\rho}\int_M\phi(r/\rho) {|D^\perp r|^2}\dmv\\
&-K\r^{\a-n-1}\left(\int_M\phi(r/\r)\dmv+\r\frac{\partial}{\partial\r} \int_M \phi(r/\r) \dmv\right)\\
=& \r^{-n}\frac{\partial}{\partial\rho}\int_M\phi(r/\rho) {|D^\perp r|^2}\dmv \\
&-K\r^\a\left(\frac{\partial}{\partial\r}\left(\r^{-n}\int_M\phi(r/\rho)\dmv\right)\right)\\
&-K\r^{\a-n-1}(1+n)\int_M\phi(r/\r)\dmv\end{split}\]
and thus
\[\begin{split}\frac{\partial}{\partial\r}\left(\r^{-n}\int_M\phi(r/\rho)\dmv\right)+\frac{K\r^{\a-1}(n+1)}{1+K\r^a}&\r^{-n}\int_M\phi(r/\r)\dmv\\
\ge&\frac{1}{1+K\r^\a}\r^{-n}\frac{\partial}{\partial\rho}\int_M\phi(r/\rho)  {|D^\perp r|^2}\dmv.
\end{split}\]

\noindent Finally using $K\r^a\le 1$ and letting $\phi$ increase to the characteristic function of the interval $(-\infty, 1)$   we have in the distributional sense (see \cite[Lemma 14.1]{schaetzle}) 

\begin{equation}\label{monoi}\begin{split}\frac{\partial}{\partial\r}\left(\r^{-n}\mv(B_\r)\right)+\frac{K\r^{a-1}(n+1)}{1+K\r^\a}\r^{-n}\mv(B_\r)\ge &\frac{1}{1+K\r^\a} \frac{\partial}{\partial\rho}\int_{B_\r} \frac{|x^\perp |^2}{r^{n+2}}\dmv.
\end{split}\end{equation}
Similarly by using the vector field $-X$, instead of $X$ and working as above we get

\[\begin{split}\frac{\partial}{\partial\r}\left(\r^{-n}\mv(B_\r)\right)-\frac{K\r^{\a-1}(n+1)}{1-K\r^\a}\r^{-n}\mv(B_\r)\le &\frac{1}{1-K\r^\a} \frac{\partial}{\partial\rho}\int_{B_\r} \frac{|x^\perp |^2}{r^{n+2}}\dmv.
\end{split}\]
Let now 
\[K_0= \frac{n+1}{\a}2K.\]
Then, using the hypothesis $K\r^\a\le 1/2$ we get
\[\begin{split}\frac{\partial}{\partial\r}\left(\r^{-n}\mv(B_\r)\right)+\a K_0\r^{\a-1}\r^{-n}\mv(B_\r)\ge &\frac{1}{2} \frac{\partial}{\partial\rho}\int_{B_\r}\frac{|x^\perp |^2}{r^{n+2}}\dmv
\end{split}\]
and

\[\begin{split}\frac{\partial}{\partial\r}\left(\r^{-n}\mv(B_\r)\right)-\a K_0\r^{\a-1}\r^{-n}\mv(B_\r)\le &2 \frac{\partial}{\partial\rho}\int_{B_\r} \frac{|x^\perp |^2}{r^{n+2}}\dmv
\end{split}\]
and multiplying these inequalities by $e^{K_0\r^\a}$ and $e^{-K_0\r^\a}$ respectively we get 
\[\frac{\partial}{\partial \rho}\left(e^ {K_0\r^\a}  \r^{-n}\mv(B_\r)\right)\ge  \frac{e^ {K_0\r^\a} }{2} \frac{\partial}{\partial\rho}\int_{B_\r} \frac{|x^\perp |^2}{r^{n+2}}\dmv\]
and 
\[\frac{\partial}{\partial \rho}\left(e^ {-K_0\r^\a}  \r^{-n}\mv(B_\r)\right)\le  2{e^ {-K_0\r^\a} } \frac{\partial}{\partial\rho}\int_{B_\r} \frac{|x^\perp |^2}{r^{n+2}}\dmv.\]
Integrating these from $0<\s<\r$ gives the result.

\end{proof}

\subsection*{Poincare-type inequality}
In the previous section we have a monotonicity formula for a quantity involving $\int_{M\cap B_\r}\dmv$. Now we want to extend this to a monotonicity formula for a quantity involving $\int_{M\cap B_\r} h\dmv$, for a positive smooth function $h$.

\begin{lemma} \label{monotonicityh} Assume that \emph{$V=\var(M,\theta)$}  has generalized normal of class $C^{0,\a}$ in $U$ in the sense of {\em{Definition} \ref{C0a-normal}}. Then for any $x\in U$ and all  $0<\s<\r$, with $\r$ such that $\ov B_\r(x)\subset U$ and $K\r^\a\le 1$, with $K$ as in condition \eqref{varcond}, we have the following monotonicity formula for a non negative function $h\in C^1(U)$
\[\begin{split}\frac{1}{\s^n} \int_{B_\s(x)}h\dmv\le& \frac{e^ {K_0\r^\a}}{\r^n} \int_{B_\r(x)}h\dmv -\frac {e^{K_0\r^a}}{2 }\int_{B_\r(x)\setminus B_\s(x) }\frac{|(y-x)^\perp |^2}{r^{n+2}} h\dmv\\
&+\frac{e^ {K_0\r^\a}}{n}  \int_{B_\r(x)}\frac{|\nabla^M h|}{r^{n-1}}\dmv,\end{split}\]
where $K_0= 2 K\frac{n+1}{\a}$ and $r$, $(y-x)^\perp$ are as in Lemma \ref{monotonicity}.

\end{lemma}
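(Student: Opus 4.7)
The plan is to mirror the proof of Lemma \ref{monotonicity} with the radial vector field $\gamma(r)(y-x)$ replaced by its weighted analogue
\[
X(y) := h(y)\,\gamma(r)\,(y-x),\qquad \gamma(r)=\phi(r/\rho),
\]
where $\phi:\R\to[0,1]$ is a smooth non-increasing cutoff vanishing for $r\ge\rho$. Taking $x=0$ for notational ease, a direct computation gives
\[
\dvg_M X = nh\gamma + hr\gamma'(1-|D^\perp r|^2) + \gamma\,\nabla^M h\cdot y,
\]
and, using $\gamma'\le 0$ together with $|d^M r|\le 1$,
\[
\|d^M X\|\le h(\gamma - r\gamma') + r\gamma|\nabla^M h|.
\]
Plugging into \eqref{varcond} produces the same structural inequality as in the proof of Lemma \ref{monotonicity}, augmented by two error terms of the form $(\mathrm{const.})\int_M r\gamma|\nabla^M h|\dmv$: one from bounding the indefinite cross-term $\gamma\,\nabla^M h\cdot y$ from below by $-r\gamma|\nabla^M h|$, and one from the new summand in the bound on $\|d^M X\|$.

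Carrying through the same manipulations as before---rewriting $n\gamma + r\gamma' = -\rho^{n+1}\partial_\rho[\rho^{-n}\gamma]$ and $r\gamma' = -\rho\,\partial_\rho\phi(r/\rho)$, dividing by $\rho^{n+1}$, and absorbing the $K\rho^\alpha\partial_\rho[\rho^{-n}I]$ term on the right---one obtains (with the abbreviations $I(\rho):=\int\phi(r/\rho)h\dmv$ and $\tilde J(\rho):=\int\phi(r/\rho)h|D^\perp r|^2\dmv$) a distributional inequality in which the two gradient-error contributions combine into a \emph{single} term with coefficient $1$, thanks to the identity $\tfrac{1}{1+K\rho^\alpha} + \tfrac{K\rho^\alpha}{1+K\rho^\alpha} = 1$. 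Using the hypothesis $K\rho^\alpha\le 1$ to estimate $\tfrac{1}{1+K\rho^\alpha}\ge \tfrac12$ (in front of the non-negative $\partial_\rho\tilde J$) and $\tfrac{K(n+1)}{1+K\rho^\alpha}\le \tfrac{\alpha K_0}{2}$, and then multiplying by the integrating factor $e^{K_0\rho^\alpha}$ (dropping a non-negative term), we arrive at
\[
\partial_\rho\bigl[e^{K_0\rho^\alpha}\rho^{-n}I\bigr] \;\ge\; \tfrac12\,e^{K_0\rho^\alpha}\rho^{-n}\,\partial_\rho\tilde J \;-\; e^{K_0\rho^\alpha}\rho^{-n-1}\int_M \gamma\, r|\nabla^M h|\dmv.
\]
Letting $\phi\nearrow\chi_{(-\infty,1)}$ then converts $I$ and $\tilde J$ into the intended ball integrals.

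Finally, integrating from $\sigma$ to $\rho$ yields the result. The $\partial_\rho\tilde J$ contribution produces $\tfrac12\int_{B_\rho\setminus B_\sigma} e^{K_0 r^\alpha} h|(y-x)^\perp|^2/r^{n+2}\dmv$, bounded below by dropping the factor $e^{K_0 r^\alpha}\ge 1$. The gradient error is handled by the Fubini identity
\[
\int_\sigma^\rho \rho'^{-n-1}\!\int_{B_{\rho'}} r|\nabla^M h|\dmv\,d\rho' \;\le\; \tfrac{1}{n}\int_{B_\rho}\frac{|\nabla^M h|}{r^{n-1}}\dmv,
\]
which follows pointwise from $\int_{\max(r,\sigma)}^\rho \rho'^{-n-1}d\rho' \le \tfrac{1}{n}r^{-n}$ for each $y$ with $|y-x|=r$. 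Combining these with the bound $e^{K_0\rho'^\alpha}\le e^{K_0\rho^\alpha}$ on the gradient term, and using $e^{K_0\sigma^\alpha}\ge 1$ on the $\sigma$-side to replace $e^{K_0\sigma^\alpha}\sigma^{-n}$ by $\sigma^{-n}$, delivers the claim. The main technical obstacle is the absorption step: it is essential that the two gradient errors combine with \emph{total coefficient exactly $1$} (and not $2$), since this is what ultimately produces the clean factor $1/n$ in front of $\int|\nabla^M h|/r^{n-1}\dmv$.
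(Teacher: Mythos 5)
Your proof is correct and follows essentially the same route as the paper: the test field $h\,\gamma(r)(y-x)$, the same bounds on $\dvg_M X$ and $\|d^MX\|$, the same absorption step yielding the differential inequality with unit coefficient on the gradient error, the integrating factor $e^{K_0\r^\a}$, and integration in $\r$ (your Fubini computation is the detail the paper leaves implicit in ``integrating gives the result''). The only caveat is that, exactly as in the paper's own proof, the integration yields the coefficient $\tfrac12$ (or at best $\tfrac{e^{K_0\s^\a}}{2}$) rather than the stated $\tfrac{e^{K_0\r^\a}}{2}$ in front of the subtracted term, since $e^{K_0\tau^\a}\le e^{K_0\r^\a}$ for $\tau\le\r$; this appears to be a harmless typo in the lemma's statement rather than a gap in your argument.
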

\begin{proof}
 Without loss of generality  we assume that $x=0$ and we write $B_\r=B_\r(0)$. We repeat the computations for the monotonicity formula in the proof of Lemma \ref{monotonicity} using now the vector field
\[ X(x)=h(x)\gamma(r) x\]
where $\gamma, r$ are as  in the proof of Lemma \ref{monotonicity}. Since
\[\begin{split} \|d^MX\| \le \g(r) h -r  \g'(r) h+r\gamma(r)|\nabla^M h|\end{split}\]
%
and
\[\dvg_M X=n\gamma(r) h+r\gamma'(r) h\left(1-|D^\perp r|^2\right)+\g(r) x\cdot\nabla^M h\]
by plugging the vector field $X$ in \eqref{varcond} we get as in \eqref{monoi} of Lemma \ref{monotonicity}

\begin{equation}\begin{split}\label{poincaremono}
\frac{\partial}{\partial\r} & \left(\r^{-n}\int_{B_\r} h\,\dmv\right) +\frac{K\r^{\a-1}(n+1)}{1+K\r^\a}\r^{-n} \int_{B_\r} h \dmv\\
&\ge\frac{1}{1+K\r^\a} \frac{\partial}{\partial\r}\int_{B_\r}\frac{|D^\perp r|^2}{r^n} h\dmv- \r^{-n-1} \int_{B_\r}r|\nabla^M h|\dmv .\end{split}
 \end{equation}
Multiplying this by $e^{K_0\r^\a}$, where $K_0=  2K\frac{n+1}{\a}$ and using the hypothesis $K\r^\a\le 1$ we get

\[\begin{split}\frac{\partial}{\partial \rho}\left(e^ {K_0\r^\a}  \r^{-n} \int_{B_\r}h\dmv\right)\ge &\frac {e^{K_0 \r^\a}}{2}  \frac{\partial}{\partial\r}\int_{B_\r}\frac{|x^\perp |^2}{r^{n+2}} h\dmv \\
&-{e^ {K_0\r^\a}}\r^{-n-1} \int_{B_\r} r|\nabla^M h|\dmv. \end{split}\]
Integrating from $\s$ to $\r$ gives the result.
\end{proof}

\section{Proof of Theorem \ref{main}}\label{mainsection}

The proof of Allard's regularity theorem (Theorem \ref{allard}) is based on a Lipschitz approximation and a tilt excess decay theorem, which in turn are derived from the monotonicity formula and the use of special choices of test vector fields in the first variation identity, respectively. For details see \cite{al1} or \cite[Chapter 5; \S 20, \S 22]{si1}.

In our case, where instead of a generalized mean curvature bounded in $L^p$  (as  in Theorem \ref{allard}) the varifold has generalized normal of class $C^{0,\a}$, we show that both a Lipschitz approximation lemma as well as a tilt excess decay theorem are still valid by use of  the monotonicity formulae given in Section~\ref{monotonicity section} and of condition \eqref{varcond}. Having established these two main steps, the proof of Theorem~\ref{main} follows exactly the one of Allard's regularity theorem. For details see \cite{al1} or \cite[Chapter 5; \S 23, \S 24]{si1}.

In what follows we give the exact statements of the Lipschitz approximation and the tilt excess decay theorem for our case and outline their proofs by pointing out the main differences to the corresponding proofs in \cite{si1}.

\subsection*{Lipschitz approximation}

We  define the quantity 
\[ E=R^{-n}\int_{B_R(0)}\|p_{T_xM}-p\|^2\dmv + (KR^\a)^2,\]
where $p_{T_xM}$ and $p$ are the orthogonal projections of $\R^{n+k}$ onto  $T_xM$ and $\R^n$ respectively.
For $V=\var(M,\theta)$ satisfying \eqref{varcond} in $B_R(0)$ we will use the following hypotheses
\begin{equation}\label{hyp'}
\left.\begin{split} 1\le \theta\,\,\, \mv\text{-a.e. ,  }&0\in\spt V\\
\o_n^{-1}R^{-n}\mv(B_R(0))\le& 2(1-a)\text{     for some   }R>0
\end{split}\,\,\,\,\right\}\tag{h'}
\end{equation}

\begin{lemma}\label{LipApprox} There exists a constant $\gamma=\gamma(n,k, \a, a)$ such that if  \emph{$V=\var(M,\theta)$}  has generalized normal of class $C^{0,\a}$ in $B_R(0)$ in the sense of {\em{Definition} \ref{C0a-normal}} and satisfies hypotheses \eqref{hyp'}, then for any $\ell\in (0, 1]$ there exists a Lipschitz function $f= (f^1, \dots, f^k): B^n_{\gamma R}(0)\to \R^k$ with
\[\Lip f\le \ell\,\,\,,\,\,\,\sup |f|\le c E^\frac{1}{2n+2} R\]
and
\[\H^n(B_{\gamma R}(0)\cap (\graph u\setminus M)\cup (M\setminus \graph u))\le c\ell^{-2n-2} E R^n\]
where $c=c(n,k, \a, a)$.
\end{lemma}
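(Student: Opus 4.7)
The plan is to adapt Simon's Lipschitz approximation argument for varifolds with generalized mean curvature in $L^p$ \cite[\S 20, \S 22]{si1}, substituting the monotonicity formulae of Section~\ref{monotonicity section} for the $L^p$-based monotonicity used there, and treating the $(KR^\alpha)^2$ contribution to $E$ in the same role as the $L^p$-curvature integral. The central object is the maximal tilt-excess
\[
    \Psi(y) := \sup_{0 < \sigma \leq R/4}\!\Bigl[\sigma^{-n}\!\int_{B_\sigma(y)}\!\|p_{T_xM} - p\|^2 \dmv + (K\sigma^\alpha)^2\Bigr], \quad y \in M \cap B_{R/2}(0),
\]
together with the \emph{good set} $G := \{y \in M \cap B_{\gamma R}(0) : \Psi(y) \leq c_0 \ell^{2(n+1)}\}$, where $c_0 = c_0(n,k,\alpha,a)$ is a small threshold fixed below and $\gamma$ is chosen small enough that all balls appearing in the argument fit inside $B_R(0)$. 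The program then splits into: (a) bound $\H^n(M \cap B_{\gamma R}(0)\setminus G) \leq c\ell^{-2(n+1)} E R^n$; and (b) show that $M\cap G$ lies in the graph of an $\ell$-Lipschitz map $f_0$, which we then extend by Kirszbraun.

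For (a), for each $y \notin G$ pick a witnessing scale $\sigma(y) \leq R/4$, run a Vitali covering to extract disjoint $\{B_{\sigma_i}(y_i)\}$ whose $5$-enlargements cover the bad set, and use Lemma~\ref{monotonicity}(ii) together with $\theta \geq 1$ to convert radii to mass, $\sigma_i^n \leq c\mv(B_{\sigma_i}(y_i))$. Disjointness of the $B_{\sigma_i}(y_i)$ then gives
\[
    \H^n\bigl(B_{\gamma R}(0)\cap M \setminus G\bigr) \leq c\ell^{-2(n+1)}\!\left(\int_{B_R(0)}\!\|p_{T_xM}-p\|^2 \dmv + (KR^\alpha)^2 R^n\right) = c\ell^{-2(n+1)} E R^n.
\]
For (b), the crux is the pointwise height bound $|(y_2 - y_1)^{\perp_{y_1}}| \leq c\sigma\Psi(y_1)^{1/2}$ for $y_1, y_2 \in G$, $\sigma := |y_1-y_2|$. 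This is obtained by applying Lemma~\ref{monotonicity} at $y_1$ on the scale $4\sigma$ to control $\int_{B_{4\sigma}(y_1)} r^{-n-2}|(x-y_1)^{\perp_{y_1}}|^2\dmv$ by the density deficit at $y_1$, which in turn is dominated by $\Psi(y_1)$ via the standard tilt-excess/density-deficit identity (the curvature error being absorbed in the $(K\sigma^\alpha)^2$ term), followed by Lemma~\ref{monotonicityh} applied to a smooth bump $h$ supported near $y_2$ to promote the $L^2$ mean into a pointwise estimate at $y_2$, in the spirit of Simon's Lemma 22.2. Combining this with $\|p_{T_{y_1}M}-p\| \lesssim \Psi(y_1)^{1/2}$ and $\Psi(y_1) \leq c_0\ell^{2(n+1)}$ yields $|q(y_2-y_1)| \leq \ell|p(y_2-y_1)|$ after fixing $c_0$ small, so $p|_G$ is bilipschitz and $f_0 := q\circ(p|_G)^{-1}$ is $\ell$-Lipschitz on $p(G)$.

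Extending $f_0$ to $f : B^n_{\gamma R}(0) \to \R^k$ via Kirszbraun preserves the Lipschitz constant, and the symmetric-difference bound follows from (a) since $\graph f$ and $M$ agree over $p(G)$. The sup bound $\sup|f| \leq c E^{1/(2n+2)} R$ follows by a standard balancing: the bad-set estimate from (a) together with the lower density bound from Lemma~\ref{monotonicity}(ii) guarantees a good point $y_*$ within a small distance of the origin (the exponent $1/(2n+2)$ arising from optimizing the radius against the bad-set measure), and the height bound then transfers to $|f(p(y_*))|$, extending to all of $B^n_{\gamma R}(0)$ via the Lipschitz estimate. The principal technical obstacle is the pointwise height bound in (b): both the exponential factor $e^{K_0\sigma^\alpha}$ and the gradient term $\rho^{-n-1}\int r|\nabla^M h|\dmv$ appearing in Lemma~\ref{monotonicityh} must be tracked carefully, but the normalization $K\rho^\alpha \leq 1/2$ built into the hypotheses keeps these errors of the same order as $\Psi(y_1)$ itself, and so they do not degrade the final slope bound.
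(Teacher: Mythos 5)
Your overall skeleton --- a good set defined by smallness of the scaled tilt excess at all scales, a Vitali covering plus the monotonicity formula for the bad--set measure, graphicality of the good set, and a Kirszbraun extension --- is the same as the paper's, which simply imports Simon's proof of the $L^p$ Lipschitz approximation with the monotonicity formulae of Section~\ref{monotonicity section} substituted in. Your part (a) is fine. The divergence, and the gap, is in part (b). The paper proves graphicality of $G$ via the \emph{two-point} density comparison of Claim~\ref{techlemma} (Simon's Lemma 12.5): if $y,z\in G$ had $|q(y-z)|\ge\ell|y-z|$, then $\Th(y)+\Th(z)$ would be bounded by $(1-\b)^{-n}(1+O(KR^\a))\,\o_n^{-1}R^{-n}\mv(B_R)+c(\ell\b)^{-n-1}(\text{tilt terms})\le 2(1-a)+o(1)<2$, contradicting $\Th\ge1$ at both points. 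You instead attempt a \emph{single-point} height bound $|(y_2-y_1)^{\perp_{y_1}}|\le c\s\Psi(y_1)^{1/2}$ by controlling $\int_{B_{4\s}(y_1)}r^{-n-2}|(x-y_1)^{\perp}|^2\dmv$ ``by the density deficit at $y_1$, which in turn is dominated by $\Psi(y_1)$.'' That step fails under hypotheses \eqref{hyp'}: the mass bound there is $\o_n^{-1}R^{-n}\mv(B_R)\le 2(1-a)$, so all that the monotonicity formula yields at a single point $y_1\in G$ is
\[\frac12\int_{B_\r(y_1)}\frac{|(x-y_1)^\perp|^2}{r^{n+2}}\dmv\le e^{K_0\r^\a}\r^{-n}\mv(B_\r(y_1))-\o_n\Th(y_1)\le\o_n\left(2(1-a)e^{cK_0R^\a}(1-\g)^{-n}-1\right),\]
a quantity of order $1-2a$, not of order $\Psi(y_1)$. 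The tilt excess does not control the density deficit (two almost-parallel sheets have vanishing tilt excess and density ratio near $2$), and the lemma is deliberately stated under the weak mass bound $2(1-a)$ because it must apply in this multi-sheeted regime for the tilt-excess decay argument. Consequently your pointwise height bound, and with it the derivation of $|q(y_2-y_1)|\le\ell|y_2-y_1|$ and of $\sup|f|\le cE^{1/(2n+2)}R$, does not follow. The two-point lemma is precisely the device that circumvents this: it is the \emph{vertical separation of two points each of density at least one} that is incompatible with the total mass, not the height of any single point.

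Two secondary issues. First, Lemma~\ref{monotonicity} controls $|(x-y_1)^{\perp}|$ with the projection onto the normal space at the \emph{integration} point $x$, not onto $N_{y_1}M$; converting one to the other costs $\int_{B_{4\s}(y_1)}\|p_{T_xM}-p_{T_{y_1}M}\|^2r^{-n}\dmv$, which is not summable over dyadic scales using only $\Psi(y_1)\le c_0\ell^{2n+2}$ (each dyadic annulus contributes a full copy of $\Psi$), and also requires a pointwise tilt bound at $y_1$, available only $\mv$-a.e. Second, even granting an $L^2$ height bound, the promotion to a pointwise bound at $y_2$ via Lemma~\ref{monotonicityh} needs the test function $h$ to be a height-dependent quantity rather than a bump, and the gradient term then carries the tilt rather than an $O(1)$ error; this can be arranged, but it is not the mechanism the paper uses and it does not rescue the missing smallness of the density deficit. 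To repair the proof, replace your step (b) by Claim~\ref{techlemma} applied at the scale $\b R\sim|y-z|$, using that $y\in G$ makes the tilt term on the right-hand side small at every such scale.
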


\begin{proof} The proof is exactly the same as the one in the case when $V$ has a generalized mean curvature that satisfies an $L^p$ estimate (see \cite{si1}). In particular we show that the set
\[G=\left\{\xi\in M\cap B_{\gamma R}: \r^{-n}\int_{B_\r(\xi)}\|p_{T_xM}-p\|^2\dmv\le \d\ell^{2n+2}\,\,,\,\,\,\forall\r\in(0,  R/10)\right\}\]
for $\gamma=\gamma(n,k, \a, a)$ and $\d=\d(n,k, \a, a)$ small enough, is a Lipschitz graph with lipschitz constant $\ell$ and that $G$ is ``most'' of $M$ in the sense that is required by the lemma. To show this last statement we use the monotonicity  formulae of Section~\ref{monotonicity section} and to show that $G$ is a lipschitz graph we use the following claim (cf. \cite[Lemma 12.5]{si1})

\begin{claim}{\cite[Lemma 12.5]{si1}}\label{techlemma} Let  $\beta\in (0,1)$, $\ell>0$. Suppose $y,z\in B_{\beta R}(0)$ with $|y-z|\ge \b R/4$, $\Th(y), \Th(z)\ge 1$ and $|q(y-z)|\ge \ell |y-z|$, where $q$ is the orthogonal projection of $\R^{n+k}$ onto $\R^k$. Then

\[\begin{split}\Th(y)+\Th(z) \le &\frac{1+ 5K_0R^\a}{(1-\b)^n\o_n}\left(1+c(\ell \beta)^{-n}KR^{1+\a}\right) R^{-n}\mu_V(B_R)\\
&+ (1+ 5K_0R^\a)c(n,k)(\ell \b )^{-n-1}R^{-n}  \int_{B_R}\|p_{T_xM}-p\|\dmv\end{split}\]
where $c$ is an absolute constant and $c(n,k)$ is a constant that depends on $n$ and $k$.
\end{claim} 

We remark that in \cite[Lemma 12.5]{si1} a bound on the mean curvature is assumed but actually for the proof only the monotonicity formula is needed, which we have here as in Section~\ref{monotonicity section}.

\end{proof}

\subsection*{The tilt-txcess decay lemma}
We define the \emph{tilt-excess} $E(\xi,\rho,T)$ (relative to the rectifiable $n$-varifold $V=\var(M,\theta)$) by
\[ E(\xi,\rho,T): = \frac{1}{2} \rho^{-n} \int_{B_\rho(\xi)} |p_{T_xM} - p_T|^2\dmv , \]
whenever $\rho > 0$, $\xi \in \R^{n+k}$ and $T$ is an $n$-dimensional subspace of $\R^{n+k}$. Here
\[|p_{T_xM} - p_T|^2: = {\rm tr}((P_{T_xM} - P_T)^2).\]

\begin{lemma}[Tilt-excess and hight lemma]\label{TEHL}
Suppose that \emph{$V=\var(M,\theta)$}  has generalized normal of class $C^{0,\a}$ in $U$ in the sense of {\em{Definition} \ref{C0a-normal}}. Then for any $\ov B_\r(\xi) \subset U$ and any $n$-dimensional subspace $T \subset \R^{n+k}$ we have
\[\begin{split} 
E(\xi,\rho/2,T)  &\leq c(1+ K\rho^\alpha)  \rho^{-n} \int_{B_ \rho(\xi) }  \frac{ \dist(x-\xi,T)^2}{\rho^2} \dmv  \\
&\quad + c(n,k)(K\rho^\alpha)^2 \rho^{-n}  \mu_V(B_ \rho(\xi)),
\end{split}\] 
where $c$ is an absolute constant and $c(n,k)$ is a constant that depends on $n$ and $k$.

\end{lemma}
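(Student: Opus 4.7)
\medskip
\noindent\emph{Proof proposal.} The plan is to test condition \eqref{varcond} with the vector field $X(x):=\eta(x)^2\,q(x)$, where (assuming without loss of generality $\xi=0$) $q(x):=p_{T^\perp}(x)$ (so $|q(x)|=\dist(x,T)$) and $\eta\in C_c^\infty(B_\r(0))$ is a standard cutoff with $\eta\equiv 1$ on $B_{\r/2}(0)$, $0\le\eta\le 1$, and $|D\eta|\le c/\r$. A short calculation, using $Dq=p_{T^\perp}$ and the identity $\Tr(p_{T_xM}\,p_{T^\perp})=\tfrac12|p_{T_xM}-p_T|^2$, gives
\[ \dvg_M X \;=\; 2\eta\,(\nabla^M\eta)\cdot q \;+\; \tfrac12\,\eta^2\,|p_{T_xM}-p_T|^2,\]
while
\[ \|d^M X\| \;\le\; 2\eta\,|\nabla^M\eta|\,|q| \;+\; c(n,k)\,\eta^2\,|p_{T_xM}-p_T|,\]
the last bound using $\|p_{T^\perp}p_{T_xM}\|_{\mathrm{op}}\le c(n,k)\,|p_{T_xM}-p_T|$, an easy consequence of the Hilbert--Schmidt identity $|p_{T_xM}-p_T|^2=2\|p_{T^\perp}p_{T_xM}\|_{\mathrm{HS}}^2$. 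Plugging $X$ into \eqref{varcond} and rearranging will isolate $\tfrac12\int\eta^2|p_{T_xM}-p_T|^2\dmv$ on the left-hand side.

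The hard part is the boundary-type term $2\int\eta\,(\nabla^M\eta)\cdot q\,\dmv$ on the right. Bounded naively, it only gives $c\r^{-1}\int_{B_\r}|q|\,\dmv$ and, via Young's inequality, produces an uncontrolled $\mu_V(B_\r)$-contribution on the right-hand side of the lemma---one whose coefficient does not vanish with $K$. The crucial observation is that $p_T q=0$, so
\[ (\nabla^M\eta)\cdot q \;=\; (p_{T_xM}D\eta)\cdot q \;=\; \bigl((p_{T_xM}-p_T)D\eta\bigr)\cdot q,\]
which upgrades the pointwise estimate to $|(\nabla^M\eta)\cdot q|\le c\r^{-1}\,|p_{T_xM}-p_T|\,|q|$. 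This extra $|p_{T_xM}-p_T|$ factor is precisely what is needed to re-route the term through an absorption into the coercive integral on the left.

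The rest is routine Young-inequality bookkeeping. Three applications---one to the (now cancelled) boundary term, and one to each of the two summands in $K\r^\a\|d^MX\|$---with weights chosen so that the three resulting $\eta^2|p_{T_xM}-p_T|^2$-contributions on the right collectively do not exceed, say, $\tfrac38\int\eta^2|p_{T_xM}-p_T|^2\dmv$, allow absorption into the left side. The remaining terms on the right are precisely $c(1+K\r^\a)\int_{B_\r}|q|^2/\r^2\,\dmv$ (from the cancelled boundary term and the $2\eta|\nabla^M\eta||q|$-piece of $\|d^MX\|$) and $c(n,k)(K\r^\a)^2\mu_V(B_\r)$ (from the $\eta^2|p_{T_xM}-p_T|$-piece of $\|d^MX\|$ and the same Young-partner of the previous term). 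The stated inequality follows upon dividing by $2(\r/2)^n$ and absorbing the dimensional constant.
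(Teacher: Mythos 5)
Your proposal is correct and takes essentially the same route as the paper: the paper tests condition \eqref{varcond} with the identical vector field $\zeta^2(x)x'$, $x'=p_{T^\perp}x$ (after normalizing $\xi=0$, $T=\R^n$), states the identical bound $\|d^MX\|\le 2|\zeta||\nabla^M\zeta||x'|+c(n,k)\zeta^2|p_{T_xM}-p_T|$, and refers the remaining computation to Simon's Lemma 22.2, whose content — the divergence identity via $\Tr(p_{T_xM}p_{T^\perp})=\tfrac12|p_{T_xM}-p_T|^2$, the cancellation $(\nabla^M\zeta)\cdot x'=((p_{T_xM}-p_T)D\zeta)\cdot x'$, and the Young-inequality absorption — is exactly what you have reconstructed.
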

\begin{proof}
The proof is the same as in the case when $V$ has a generalized mean curvature $\vec H\in L^p$ (see \cite[Lemma 22.2]{si1}) with the difference that here, after assuming w.l.o.g. that $T=\R^n$ and $\xi=0$, we use the vector field 
\[ X(x) = \zeta^2(x)x',\quad x'=(0,\dots,0,x^{n+1},\dots,x^{n+k})\]
in the estimate \eqref{varcond} instead of using it  the first variation formula. $\zeta $ here is a cut-off function such that $\zeta \equiv 1$ in $B_{\rho/2}(0)$, $\zeta \equiv 0$ outside $B_\rho(0)$, and $|D\zeta | \leq 3/\rho$. In order to estimate the right hand side of \eqref{varcond} with this vector field inserted, we use 

\[\begin{split}\|d^MX\|\le& 2|\zeta||\nabla^M\zeta||x'|+c(n,k)\zeta^2  |p_{T_xM} - p_T|.\end{split}\]

\end{proof}

In order to state the  Tilt-excess Decay Theorem in a convenient manner, we let $\varepsilon, a  \in (0,1)$, $\rho >0 $, and $T$ an $n$-dimensional subspace of $\R^{n+k}$, be fixed, and we shall consider the hypotheses
\begin{equation}\label{tilt-excess hypotheses}
\begin{cases}
1 \leq \theta \leq 1+ \varepsilon\quad \text{$\mu_V$-a.e. in $U$} \\
\xi \in \spt(\mu_V) ,\; B_\rho(\xi) \subset U,\; \frac{ \mu_V(B_\rho(\xi)) }{ \omega_n \rho^n} \leq 2(1- a ) ,\\
E_*(\xi,\rho,T) \leq \varepsilon,
\end{cases} 
\end{equation}
where
\[ E_*(\xi,\rho,T) := \max\left\{ E(\xi,\rho,T), \varepsilon^{-1}(K\rho^\alpha)^2 \right\}.\]

\begin{theorem}[Tilt-excess decay]\label{TED thm}
For any $a\in (0,1)$, $\alpha \in (0,1)$, there are constants $\eta, \varepsilon \in (0,1/2)$, depending only on $n,k,a,\alpha$, such that if  \emph{$V=\var(M,\theta)$}  has generalized normal of class $C^{0,\a}$ in $U$ in the sense of {\em{Definition} \ref{C0a-normal}} and hypotheses \eqref{tilt-excess hypotheses} hold,
then
\begin{equation}\label{excess-decay}
E_*(\xi,\eta\rho,S) \leq \eta^{2\a} E_*(\xi,\rho,T)
\end{equation}
for some $n$-dimensional subspace $S$ of $\R^{n+k}$.
\end{theorem}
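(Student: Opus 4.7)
The plan is an Allard-style blow-up by harmonic approximation, with the Lipschitz approximation Lemma~\ref{LipApprox} and the tilt-excess--height Lemma~\ref{TEHL} replacing their $L^p$-mean-curvature analogues (cf.~\cite[\S 22]{si1}). First I fix $\eta \in (0,1/2)$ (to be chosen at the end) and suppose for contradiction that no admissible $\varepsilon$ exists, producing sequences $\varepsilon_j \downarrow 0$ and rectifiable varifolds $V_j = \var(M_j,\theta_j)$ with data $(\xi_j,\rho_j,T_j,K_j)$ satisfying \eqref{tilt-excess hypotheses} for $\varepsilon = \varepsilon_j$ but violating \eqref{excess-decay} for every $n$-plane $S$. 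After translating, rotating, and rescaling we may take $\xi_j = 0$, $\rho_j = 1$, $T_j = \R^n$, and we set $E_j := E_*(0,1,\R^n) \le \varepsilon_j$. The definition of $E_*$ gives $(K_j)^2 \le \varepsilon_j E_j$, hence $K_j/\sqrt{E_j} \le \sqrt{\varepsilon_j} \to 0$, which is the crucial smallness that will kill the error term from \eqref{varcond} after rescaling by $\sqrt{E_j}$.

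Next I apply Lemma~\ref{LipApprox} with a Lipschitz parameter $\ell$ small (depending only on $n,k,\alpha,a$) to obtain maps $u_j : B^n_{\gamma}(0) \to \R^k$ whose graphs coincide with $M_j$ away from an exceptional set of $\H^n$-measure at most $c\,\ell^{-2n-2}E_j$, together with the height bound on $|u_j|$. Combined with the monotonicity formulae of Section~\ref{monotonicity section} and Lemma~\ref{TEHL}, the rescaled maps $w_j := u_j/\sqrt{E_j}$ are uniformly bounded in $W^{1,2}(B^n_\gamma;\R^k)$ and $L^\infty$, so a subsequence converges weakly in $W^{1,2}$ and strongly in $L^2$ to some $w_\infty$.

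The central step is to show that every component of $w_\infty$ is harmonic. For this I test \eqref{varcond} against $\pm X_i = \pm \zeta\,e_{n+i}$ with $\zeta \in C^1_c(B^n_{\gamma/2})$ and $i = 1,\dots,k$. On the graph portion of $M_j$ the divergence $\dvg_{M_j}X_i$ simplifies (just as in the \emph{smooth graphs} computation given in the excerpt) to a first-order expression in $Du_j^i$ with errors controlled pointwise by $\|p_{T_xM_j}-p\|$; the non-graph portion contributes at most $c\,\ell^{-2n-2}E_j\|D\zeta\|_\infty$. The right-hand side of \eqref{varcond} is bounded by $K_j\|D\zeta\|_\infty\mu_{V_j}(B_1) \le C K_j$. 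Dividing everything by $\sqrt{E_j}$ and letting $j \to \infty$, using the convergence of $w_j$ and $K_j/\sqrt{E_j}\to 0$, yields $\int_{B^n_{\gamma/2}} D\zeta \cdot Dw_\infty^i\,dx = 0$ for every $\zeta$, so $\Delta w_\infty^i = 0$.

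Standard interior estimates for harmonic functions then give
\[ \eta^{-n-2} \int_{B^n_{\eta\gamma/2}(0)} \bigl|Dw_\infty(x) - Dw_\infty(0)\bigr|^2 dx \le C(n,k)\, \eta^2 \,\|w_\infty\|_{W^{1,2}(B^n_{\gamma/2})}^2, \]
and rescaling back, using Lemma~\ref{TEHL} at scale $\eta$ to upgrade this $L^2$-height control into tilt-excess control, yields $E(0,\eta,S_j) \le (C\eta^2 + o(1))\,E_j$ for large $j$, with $S_j$ the plane associated to the linearisation of $w_\infty$ at $0$. Since $(K_j\eta^\alpha)^2 = \eta^{2\alpha}(K_j)^2 \le \eta^{2\alpha}\varepsilon_j E_j$, this gives $E_*(0,\eta,S_j) \le \eta^{2\alpha}E_j$ once $\eta$ is fixed so small that $C\eta^{2-2\alpha} \le 1/2$, contradicting the failure of \eqref{excess-decay} for $V_j$. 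I expect the hardest step to be the harmonicity identification: one must balance the parameter $\ell$ in Lemma~\ref{LipApprox} so that the non-graph error, the tilt error on the graph part, and the $K_j$-error in \eqref{varcond} are all $o(\sqrt{E_j})$ simultaneously, which is precisely what the coupling $(K_j)^2 \le \varepsilon_j E_j$ with $\varepsilon_j\to 0$ makes possible.
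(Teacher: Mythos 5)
Your proposal is essentially the paper's argument: a harmonic-approximation scheme built on Lemma~\ref{LipApprox}, the approximate weak harmonicity obtained by testing \eqref{varcond} with $\zeta e_{n+j}$ (with the error $K\rho^\alpha\le\sqrt{\varepsilon E_*}=o(\sqrt{E_*})$ exactly as you identify), and Lemma~\ref{TEHL} to convert height decay back into tilt decay, following \cite[Theorem 22.5]{si1}; the only difference is that you inline the compactness/blow-up underlying the harmonic approximation lemma rather than citing it. One small imprecision: the sup bound $\sup|u_j|\le cE_j^{1/(2n+2)}$ does not give a uniform $L^\infty$ bound on $w_j=u_j/\sqrt{E_j}$ -- for the $W^{1,2}$ bound on $w_j$ one needs the standard "height excess bounded by tilt excess" estimate (the counterpart of \cite[Lemma 22.2(1)]{si1}), which is available here via the monotonicity formulae and is all that is actually used.
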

\begin{proof}

Let $f$  be the approximating Lipschitz function as in Lemma \ref{LipApprox}.  We show that each component  $f^j$, $j=1,\dots, k$,  of  $f$ is well-approximated by a harmonic function. To do that we use condition \eqref{varcond},
with $X= \zeta e_{n+j}$ ($j =1,...,k$), for some $\zeta \in C_c^1(U)$, which gives, in view of $e_{n+j} = Dx^{n+j}$,
\[\int_M \nabla^Mx^{n+j}  \cdot \nabla^M \zeta \dmv   \leq  \,K\rho^\alpha \int_M \| d^M(\zeta e_{n+j}) \| \dmv= K\rho^\alpha \int_M | \nabla ^M\zeta | \dmv.\]
Using Lemma \ref{LipApprox} and the area estimate of \eqref{tilt-excess hypotheses}, along with the above inequality we obtain
\begin{align}\label{equation 10 TED}
 \rho^{-n} \int_{M_1} \nabla^M\widetilde f^j \cdot \nabla^M \zeta  \dmv  \leq c \| \nabla ^M\zeta \|_{C^0} E_*,
 \end{align}
where $M_1 = M \cap {\rm graph}(f)$ and where $\widetilde f^j$ is defined on $\R^{n+k}$ by $\widetilde f^j(x^1,...,x^{n+k}):= f^j(x^1,...,x^n)$ for $x=(x^1,...,x^{n+k}) \in \R^{n+k}$.

The remaining part of the proof follows the arguments in \cite[Theorem 22.5]{si1}, but uses the above version of the tilt-excess and hight lemma (Lemma \ref{TEHL}), instead of \cite[Lemma 22.2]{si1}.
\end{proof}

\section{Boundary regularity}\label{bdry}

Combining Theorem~\ref{main} with Allard's boundary regularity theorem \cite{al2} (see \cite{bourni2} for $C^{1,\a}$ boundaries) we get the following boundary regularity theorem. We assume that $B$ is an (n-1)-dimensional  $C^{1,\a}$ manifold in $\R^{n+k}$ and assume now that  $V=\var(M,\theta)$ is a rectifiable $n$-varifold in $\R^{n+k}$, 
that has  generalized normal  of class $C^{0,\alpha}$ in $U\setminus B$, where $U$ is an open subset of $\R^{n+k}$. I.e. $V$ satisfies condition \eqref{varcond} of Definition~\ref{C0a-normal} for all $B_\r(x) \subset U$ and all $X \in C_c^1(B_\r(x),\R^{n+k})$ with $X=0$ on $B$.

\noindent We will also use the following hypotheses
\begin{equation}\label{hyp1}
\left.\begin{split} 1\le \theta\,\,\, \mv\text{-a.e. ,  }0\in\spt V\cap B\,\,,&\,B_\r(0)\subset U\\
\o_n^{-1}\r^{-n}\mv(B_\r(0))\le& 1+\frac{\d}{2}\\
\k\r^\a\le & \d,
\end{split}\,\,\,\,\right\}\tag{$h_b$}
\end{equation}
where $\k$ is such that
\[\|p_{Tx B}-p_{T_yB}\|\le \k |x-y|^\a\,\,,\,\,\forall x, y \in B.\]

\begin{theorem}\label{mainb} There are $\d=\d(n,k,\a)$ and $\gamma=\gamma(n,k,\a)$ $\in (0,1)$ such that if \emph{$V=\var(M,\theta)$}  satisfies hypotheses \eqref{hyp1}  and $V$ has generalized normal of class $C^{0,\a}$ in $U\setminus B$ in the sense of {\em{Definition} \ref{C0a-normal}}
with $K\r^\a\le \d$, then $V\cap B_{\gamma\r}(0)$ is a graph of a $C^{1, \a}$ function with scaling invariant $C^{1, \a}$ estimates depending only on $n,k,\a,\d$.
\end{theorem}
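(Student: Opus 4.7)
The plan is to adapt the proof of Allard's boundary regularity theorem for $C^{1,\alpha}$ boundaries \cite{al2,bourni2} in exactly the same way that Theorem~\ref{main} adapts Allard's interior theorem. That is, I would rederive its three main ingredients---a boundary monotonicity formula, a boundary Lipschitz approximation lemma, and a boundary tilt-excess decay theorem---starting from condition \eqref{varcond} and the $C^{1,\alpha}$ control on $B$, and then iterate exactly as in \cite{bourni2}. An alternative but equivalent viewpoint is to straighten $B$ near $0$ by a $C^{1,\alpha}$ diffeomorphism of $\R^{n+k}$, pushing $V$ forward to a varifold whose generalized normal is still of class $C^{0,\alpha}$ (with constant $c(n,k)(K+\kappa)$) and whose boundary is the flat $(n-1)$-plane $\R^{n-1}\times\{0\}\times\{0\}^k$, reducing to the flat-boundary case.

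The new technical step is the boundary monotonicity formula, obtained by mimicking the proof of Lemma~\ref{monotonicity} at a point $x\in B$. The obstacle is that the natural radial vector field $X(y)=\gamma(|y-x|)(y-x)$ does not vanish on $B$, so \eqref{varcond} does not directly apply. I would remedy this by subtracting from $(y-x)$ its orthogonal projection onto $T_xB$ extended $C^{1,\alpha}$-smoothly along a tubular neighborhood of $B$; equivalently, after the straightening above, one simply replaces $(y-x)$ by its reflection across $\R^{n-1}\times\{0\}\times\R^k$ added back in. The resulting vector field is admissible in \eqref{varcond}, and its divergence differs from the standard one by an error of order $\kappa\,|y-x|^\alpha$. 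Combined with the $K\rho^\alpha$ error from \eqref{varcond} and the hypothesis $(K+\kappa)\rho^\alpha\le 2\delta$, this yields boundary analogues of Lemma~\ref{monotonicity} and Lemma~\ref{monotonicityh} with $K$ replaced by $K+\kappa$.

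Given the boundary monotonicity, the boundary Lipschitz approximation lemma follows by the argument of \cite[\S 12]{si1} adapted to the boundary as in \cite{bourni2}, using Claim~\ref{techlemma}-type estimates on both sides of $B$; this yields a Lipschitz function $f$ defined on a half-disk in $T_0V$ whose trace on $T_0B$ lies close to a $C^{1,\alpha}$ parametrization of $B$. For the boundary tilt-excess decay, the same test against $X=\zeta e_{n+j}$ with $\zeta$ now required to vanish on (the parametrization of) $B$ gives
\begin{equation*}
\rho^{-n}\int_{M_1}\nabla^M\widetilde f^j\cdot\nabla^M\zeta\dmv\le c\,\|\nabla^M\zeta\|_{C^0}E_*,
\end{equation*}
so $f^j$ is $L^2$-close to a harmonic function with prescribed Dirichlet data on $T_0B$; standard boundary Schauder estimates then yield $E_*(0,\eta\rho,S)\le \eta^{2\alpha}E_*(0,\rho,T)$ for some half-$n$-plane $S$ containing $T_0B$. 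The main obstacle throughout is the first step: the requirement in \eqref{varcond} that test fields vanish on $B$ forces a boundary correction at every stage, which is only admissible because the hypothesis $\kappa\rho^\alpha\le\delta$ quantitatively bounds the deviation of $B$ from $T_0B$ on the same scale as the condition $K\rho^\alpha\le\delta$. Once these two ingredients are in hand, the iteration producing $C^{1,\alpha}$ regularity on $B_{\gamma\rho}(0)$ proceeds verbatim as in the interior proof of Theorem~\ref{main}, with Theorem~\ref{main} itself handling points of $\spt V$ in the open complement of $B$.
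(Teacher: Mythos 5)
The paper itself gives no argument here: Theorem~\ref{mainb} is stated as a direct consequence of Theorem~\ref{main} together with Allard's boundary regularity theorem \cite{al2} and its $C^{1,\alpha}$-boundary version \cite{bourni2}, with all the boundary machinery delegated to those references. Your plan --- rebuild the boundary monotonicity formula, boundary Lipschitz approximation and boundary tilt-excess decay under \eqref{varcond} and then iterate --- is the right skeleton and is what that citation implicitly requires. The Lipschitz approximation, the harmonic (half-space) approximation via $X=\zeta e_{n+j}$, and the final iteration are all unproblematic adaptations.

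The genuine gap is in your first and pivotal step, the boundary monotonicity formula at a point $x\in B$. Neither of the test fields you propose does the job. (1) Subtracting from $y-x$ its projection onto $T_xB$ gives, after straightening, $X(y)=\gamma(r)\,P_{L^\perp}(y-x)$ with $L=T_xB$ an $(n-1)$-plane; this does vanish on $B$, but its tangential divergence is
\[
\dvg_M X=\gamma(r)\Bigl(n-\sum_{i=1}^n|P_L\tau_i|^2\Bigr)+r\gamma'(r)\,(\cdots),
\]
and $n-\sum_i|P_L\tau_i|^2$ equals $1$ (not $n$) whenever $T_yM\supset L$, e.g.\ for a half-plane with boundary $L$. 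The derivation of Lemma~\ref{monotonicity} hinges on the coefficient of $\gamma(r)$ being exactly $n$; with this field you obtain at best a monotonicity of $\rho^{-1}(\cdots)$, which gives no control of the $n$-dimensional density ratio. (2) The ``reflected'' field $\gamma(r)\bigl((y-x)+R(y-x)\bigr)$, with $R$ the reflection across $\R^{n-1}\times\{0\}\times\R^k$, equals $2\gamma(r)P_{\R^{n-1}\times\{0\}\times\R^k}(y-x)$ and is \emph{nonzero} on $B\setminus\{x\}$, hence inadmissible in \eqref{varcond}. The underlying difficulty is that the full radial field restricted to $B$ is tangent to $B$ but not zero, and in the present framework $\delta V$ is completely uncontrolled against fields that are merely tangent to $B$ (one cannot even conclude that $\|\delta V\|$ is a Radon measure, since \eqref{varcond} only bounds $\delta V$ by first derivatives of $X$). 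This is precisely the part of the argument that \cite{al2} and \cite{bourni2} spend most of their effort on (constructing the boundary measure and estimating its mass at scale $\rho$ before any monotonicity is available), and your sketch replaces it with a device that fails. Until the boundary monotonicity --- and, as a consequence, the interior density bounds at points near $B$ at scales comparable to their distance to $B$, which Theorem~\ref{main} needs as input --- is correctly established, the rest of the proposal cannot get started.
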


\section{General varifolds}\label{GV}
We show that the above monotonicity and regularity results apply to general varifolds that satisfy a condition similar to \eqref{varcond}.

We consider $V$ a (general) $n$-varifold on $U\subset \R^{n+k}$; that is, a Radon measure on $G_n(U)=U\times G(n+k, n)$. For $V$ we have an associated Radon measure $\mu_V$ on $U$ defined by
\[\mu_V(A)=V(\pi^{-1}(A))\,\,,\,\,\,A\subset U\,\,\,,\,\,\,(\pi:(x,S)\mapsto x).\]
The mass $\mass(V)$ of $V$ is defined by 
\[\mass(V)=\mv(U).\]

\begin{definition}\label{C0a-normalV}
Let $U$ be an open subset of $\R^{n+k}$ and let \em{$V$} be an $n$-varifold in $U$.
We say that $V$ has {\em generalized normal  of class $C^{0,\alpha}$ in $U$ } if there exists a $K\geq 0$ such that for all $B_\r(x) \subset U$ and all $X \in C_c^1(B_\r(x),\R^{n+k})$
\begin{equation}\label{varcondV}
\d V(X)\le K\r^\a\int_{G_n(U)} \|DX(y)\circ P_S\| dV(y,S),
\tag{$\star\star$}\end{equation}
where $P_{S}$ denotes the orthogonal projection matrix of $\R^{n+k}$ onto $S$, and where for a matrix $A=(a_{ij})$, $\|A\|$ is the euclidean operator norm, i.e. $\|A\|=\sup_{|v|=1}|Av|$.
\end{definition}

Exactly as in the proof of Lemma \ref{monotonicity} we have the following monotonicity formulae for general varifolds.

\begin{lemma}\label{monotonicityV}
Assume that the n-varifold \emph{$V$} has generalized normal  of class $C^{0,\alpha}$ in $U$ in the sense of \emph{Definition~\ref{C0a-normalV}}. Then for any $x\in U$ and all  $0<\s<\r$, with $\r$ such that $\overline B_\r(x)\subset U$ and $K\r^\a\le 1/2$ we have the following monotonicity formulae

\begin{itemize}
\item[{\em (i)}]
$e^{K_0\r^\a}\r^{-n}\mv(B_\r(x))\ge e^{K_0\s^\a}\s^{-n}\mv(B_\s(x))+ \frac12Q_{\s,\r}(x)$,
\item[{\em (ii)}]
$e^{-K_0\r^\a}\r^{-n}\mv(B_\r(x))\le e^{-K_0\s^\a}\s^{-n}\mv(B_\s(x))+ 2Q_{\s,\r}(x)$,
\end{itemize}
where $K_0=\frac{n+1}{\a}2K$, and where
\[ Q_{\s,\r}(x):= \int_{G_n(B_\r(x)\setminus B_\s(x))} \frac{|P_{S^\perp}(y-x)|^2}{r(y)^{n+2}} dV(y,S),\]
with $r(y)=|y-x|$.

\end{lemma}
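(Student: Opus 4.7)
The plan is to transplant the proof of Lemma~\ref{monotonicity} to the general-varifold setting, with two substitutions: every integral $\int_M\cdots\dmv$ is replaced by $\int_{G_n(U)}\cdots dV(y,S)$, and the approximate tangent plane $T_yM$ is replaced by the plane $S$ carried as the varifold variable. Because the rectifiable calculation only uses pointwise linear-algebraic properties of the tangent-plane projection, the transplantation is essentially mechanical.

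Concretely, after reducing to $x=0$ and writing $r=|y|$, I would test \eqref{varcondV} with the same cutoff vector field $X(y)=\g(r)y$ used before, with $\g$ a smooth nonincreasing cutoff supported in $[0,\r)$. A direct calculation of the general-varifold divergence $\dvg_S X(y)=\Tr(DX(y)\circ P_S)$ gives
\[
\dvg_S X(y)=n\g(r)+r\g'(r)\!\left(1-\frac{|P_{S^\perp}y|^2}{r^2}\right),
\]
while, since $\|P_S\|\le 1$ and $\g'\le 0$, the operator-norm bound $\|DX(y)\circ P_S\|\le \g(r)-r\g'(r)$ is identical to the one in the rectifiable proof. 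Plugging these into \eqref{varcondV}, setting $\g(r)=\phi(r/\r)$, and letting $\phi\uparrow\chi_{(-\infty,1)}$ produces, in the distributional sense, exactly the analogue of \eqref{monoi}:
\[
\frac{\partial}{\partial\r}\!\left(\r^{-n}\mv(B_\r)\right)+\frac{K\r^{\a-1}(n+1)}{1+K\r^\a}\r^{-n}\mv(B_\r)\ge \frac{1}{1+K\r^\a}\frac{\partial}{\partial\r}\int_{G_n(B_\r)}\frac{|P_{S^\perp}y|^2}{r^{n+2}}\,dV(y,S),
\]
together with the reverse inequality obtained by substituting $-X$ for $X$.

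Multiplying by $e^{\pm K_0\r^\a}$ with $K_0=2K(n+1)/\a$, using $K\r^\a\le 1/2$ to absorb the $(1\pm K\r^\a)^{-1}$ prefactors into the constants $\tfrac12$ and $2$, and integrating from $\s$ to $\r$ then delivers the two stated monotonicity formulas, with $Q_{\s,\r}(x)$ appearing in place of the rectifiable perpendicular-integral (the bound $e^{K_0\tau^\a}\ge 1$ on the right-hand side of (i), and the reverse bound for (ii), are absorbed exactly as in Lemma~\ref{monotonicity}). No genuine obstacle arises: the divergence identity and the operator-norm bound are purely pointwise statements about an orthogonal projection and therefore hold with $P_S$ just as with $P_{T_yM}$. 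The only point requiring care is notational bookkeeping — keeping the plane $S$ in the integrand throughout so that the right-hand side retains the $dV(y,S)$ structure and is not prematurely collapsed to a $\mv$-integral.
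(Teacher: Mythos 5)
Your proposal is correct and follows exactly the route the paper intends: the paper's own justification of Lemma~\ref{monotonicityV} is simply that the argument of Lemma~\ref{monotonicity} carries over verbatim with $\int_M\cdots\dmv$ replaced by $\int_{G_n(U)}\cdots dV(y,S)$ and $P_{T_yM}$ by $P_S$. Your verification of the pointwise identities for $\dvg_S X$ and $\|DX\circ P_S\|$ is the only substantive check required, and it is done correctly.
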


A direct consequence of Lemma~\ref{monotonicityV} is the following lemma.
\begin{lemma}\label{semicontinuousTheta}
Assume that the n-varifold \emph{$V$} has generalized normal of class $C^{0,\alpha}$ in $U$ in the sense of \emph{Definition~\ref{C0a-normalV}}. Then the density function
\[ x\mapsto \Theta^n(\mv, x) : =\lim_{\r\to 0} \r^{-n}\mv(B_\r(x))\]
is well defined and is upper semi continuous in $U$.
\end{lemma}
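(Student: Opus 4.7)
The plan is to read off both claims from Lemma~\ref{monotonicityV}(i), which is exactly the monotonicity fact that makes the density theory of varifolds with $L^p$ mean curvature work; the only subtlety is the $C^{0,\alpha}$ correction factor $e^{K_0\rho^\alpha}$, which is harmless in the $\rho \to 0$ limit.

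First, for well-definedness of $\Theta^n(\mu_V,x)$ at any $x \in U$: choose $\rho_0>0$ so that $\overline{B_{\rho_0}(x)} \subset U$ and $K\rho_0^\alpha \leq 1/2$. By Lemma~\ref{monotonicityV}(i) (applied with the nonnegative term $\tfrac12 Q_{\sigma,\rho}(x)$ dropped), the function
\[
\rho \longmapsto e^{K_0 \rho^\alpha}\, \rho^{-n}\, \mu_V(B_\rho(x))
\]
is monotone nondecreasing on $(0,\rho_0]$. Hence its limit as $\rho \downarrow 0$ exists in $[0,\infty]$, and since $e^{K_0\rho^\alpha} \to 1$, this limit equals $\lim_{\rho \to 0} \rho^{-n}\mu_V(B_\rho(x)) = \Theta^n(\mu_V,x)$. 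In particular $\Theta^n(\mu_V,x)$ is well defined and equals $\inf_{0<\rho\leq\rho_0} e^{K_0\rho^\alpha} \rho^{-n}\mu_V(B_\rho(x))$.

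For upper semicontinuity, fix $x \in U$ and let $x_j \to x$. Choose any $\rho \in (0,\rho_0)$ small enough that $\overline{B_\rho(x)} \subset U$ and $K\rho^\alpha \leq 1/2$; then for all sufficiently large $j$, $\overline{B_\rho(x_j)} \subset U$ as well, so the monotonicity bound applies at each $x_j$, giving
\[
\Theta^n(\mu_V,x_j) \,\leq\, e^{K_0\rho^\alpha}\, \rho^{-n}\, \mu_V(B_\rho(x_j)).
\]
Since for every $\varepsilon > 0$ we have $B_\rho(x_j) \subset B_{\rho+\varepsilon}(x)$ for $j$ large, the standard Radon measure inequality gives $\limsup_j \mu_V(B_\rho(x_j)) \leq \mu_V(\overline{B_\rho(x)})$, and therefore
\[
\limsup_{j \to \infty} \Theta^n(\mu_V,x_j) \,\leq\, e^{K_0\rho^\alpha}\, \rho^{-n}\, \mu_V(\overline{B_\rho(x)}).
\]

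To finish, let $\rho \to 0$ on the right. Since $\mu_V(\overline{B_\rho(x)}) \leq \mu_V(B_{\rho(1+\varepsilon)}(x))$ for every $\varepsilon > 0$, we get $\rho^{-n}\mu_V(\overline{B_\rho(x)}) \leq (1+\varepsilon)^n [\rho(1+\varepsilon)]^{-n}\mu_V(B_{\rho(1+\varepsilon)}(x))$, and letting first $\rho \to 0$ then $\varepsilon \to 0$ shows $\limsup_{\rho\to 0} \rho^{-n}\mu_V(\overline{B_\rho(x)}) \leq \Theta^n(\mu_V,x)$; together with the trivial reverse inequality this yields $\lim_{\rho\to 0} \rho^{-n}\mu_V(\overline{B_\rho(x)}) = \Theta^n(\mu_V,x)$. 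Combined with $e^{K_0\rho^\alpha} \to 1$, we conclude
\[
\limsup_{j\to\infty} \Theta^n(\mu_V,x_j) \,\leq\, \Theta^n(\mu_V,x),
\]
which is the claimed upper semicontinuity. There is no real obstacle here beyond the careful open-ball/closed-ball bookkeeping; everything else is immediate from Lemma~\ref{monotonicityV}(i).
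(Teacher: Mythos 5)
Your argument is correct and is exactly the route the paper intends: it states Lemma~\ref{semicontinuousTheta} as a direct consequence of the monotonicity formula of Lemma~\ref{monotonicityV}(i) and omits the details, which are precisely the standard ones you supply (monotonicity of $e^{K_0\rho^\alpha}\rho^{-n}\mu_V(B_\rho(x))$ for existence of the limit, plus upper semicontinuity of $\mu_V$ on closed balls under convergence of centers). The open-ball/closed-ball bookkeeping at the end is handled correctly.
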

%
%
%

In the class of varifolds satisfying condition \eqref{varcondV} we get similar properties for varifold limits as in the case of varifolds having locally bounded first variation cf. \cite[Theorem 40.6]{si1}. In particular, we have the following result.

\begin{theorem} \label{semicont}Suppose $V_i\to V$ (as Radon measures in $G_n(U)$) and $\Theta^n(V_i, y)\ge 1$ for $\mu_{V_i}$-a.e. $y \in U$, and suppose that each $V_i$ has generalized normal of class $C^{0,\alpha}$ in $U$ in the sense of \emph{Definition~\ref{C0a-normalV}}, satisfying condition \eqref{varcondV} with $K=K_i$, and such that 
\[\sup_{i} K_i<\infty.\]
Then $V$ also has generalized normal of class $C^{0,\alpha}$ in $U$ with $K$ such that 
\[K= \liminf_{i\to\infty} K_i<\infty\]
and furthermore $\Theta^n(\mv,y)\ge 1$ for $\mv$-a.e. $y \in U$.
\end{theorem}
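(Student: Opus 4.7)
The plan is to establish the two conclusions separately: first verify \eqref{varcondV} for $V$ by a direct weak-convergence argument, then transfer a pointwise lower mass bound from the $V_i$ to $V$ using Lemma~\ref{monotonicityV}. For the first step, note that both sides of \eqref{varcondV} are integrals over $G_n(U)$ against $V_i$ of continuous, compactly supported functions of $(y,S)$: the left side is $\int \Tr(P_S\circ DX(y))\,dV_i(y,S)$, and the right integrates $(y,S)\mapsto \|DX(y)\circ P_S\|$, both continuous in $(y,S)$ and supported in $\spt X\times G(n+k,n)$. Weak convergence $V_i\to V$ thus gives $\delta V_i(X)\to \delta V(X)$ and $\int \|DX\circ P_S\|\,dV_i\to \int \|DX\circ P_S\|\,dV$. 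Passing to a subsequence $\{i_j\}$ with $K_{i_j}\to \liminf_i K_i=:K$ and taking limits in $\delta V_{i_j}(X)\le K_{i_j}\rho^\alpha \int \|DX\circ P_S\|\,dV_{i_j}$ yields \eqref{varcondV} for $V$ with constant $K$.

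For the density bound, applying Lemma~\ref{monotonicityV}(i) to $V_i$ with $\sigma\to 0$, together with the hypothesis $\Theta^n(\mu_{V_i},\cdot)\ge 1$ $\mu_{V_i}$-a.e., gives
\[ \rho^{-n}\mu_{V_i}(B_\rho(z))\ge e^{-K_0^{(i)}\rho^\alpha}\quad\text{for $\mu_{V_i}$-a.e.\ $z$},\qquad K_0^{(i)}:=\tfrac{2(n+1)}{\alpha}K_i.\]
The set of ``good'' points where this holds is $\mu_{V_i}$-full and hence dense in $\spt(\mu_{V_i})$; approximating $z_0\in\spt(\mu_{V_i})$ by good $z_n\to z_0$ and using $B_{\rho-\varepsilon}(z_n)\subset B_\rho(z_0)$ extends the inequality to every $z_0\in\spt(\mu_{V_i})$. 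Now fix $y\in\spt(\mu_V)$ and $\rho$ with $\overline B_\rho(y)\subset U$; since $\mu_V(B_\varepsilon(y))>0$ for all $\varepsilon>0$, weak convergence produces $y_i\in\spt(\mu_{V_i})$ with $y_i\to y$. For $\rho'<\rho$, $\delta>0$ and $i$ large, $\overline B_{\rho'}(y_i)\subset \overline B_{\rho'+\delta}(y)$; the Portmanteau inequality $\mu_V(C)\ge \limsup_i \mu_{V_i}(C)$ for closed $C$ then yields, along a subsequence with $K_{i_j}\to K$,
\[ \mu_V(\overline B_{\rho'+\delta}(y))\ge \limsup_j \mu_{V_{i_j}}(\overline B_{\rho'}(y_{i_j}))\ge (\rho')^n e^{-K_0(\rho')^\alpha}.\]
Sending $\delta\to 0$ (continuity of $\mu_V$ from above on nested closed sets), then $\rho'\uparrow\rho$, gives $\mu_V(B_\rho(y))\ge \rho^n e^{-K_0\rho^\alpha}$; finally $\rho\to 0$ shows $\liminf_{\rho\to 0}\rho^{-n}\mu_V(B_\rho(y))\ge 1$. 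Since $V$ satisfies \eqref{varcondV}, Lemma~\ref{semicontinuousTheta} ensures the density exists at $y$, so $\Theta^n(\mu_V,y)\ge 1$ for every $y\in\spt(\mu_V)$, and in particular $\mu_V$-a.e.

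The main obstacle is this last transfer: weak convergence of Radon measures gives $\mu_V(O)\le \liminf \mu_{V_i}(O)$ on open sets --- the ``wrong direction'' for transferring a lower mass bound --- so one is forced to work with closed balls and to coordinate a joint limit in the ball radius with the approximating support points $y_i\to y$.
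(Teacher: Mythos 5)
Your argument is correct and follows essentially the same route as the paper: weak convergence of $V_i\to V$ tested against the continuous, compactly supported integrands on both sides of \eqref{varcondV} (along a subsequence realizing $\liminf K_i$), and Lemma~\ref{monotonicityV}(i) with $\sigma\to 0$ to get the lower mass bound, which is then passed to the limit. Your treatment of the density transfer is in fact more careful than the paper's --- the paper passes from a ``$\mu_{V_i}$-a.e.'' bound directly to a ``$\mu_V$-a.e.'' one via $\mu_{V_i}(B_\rho(x))\to\mu_V(B_\rho(x))$ for a.e.\ $\rho$, whereas you make the needed approximation of points of $\spt\mu_V$ by points of $\spt\mu_{V_i}$ and the use of closed balls explicit.
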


\begin{proof}
Let $B_\r(x)\subset U$ and $X \in C_c^1(B_\r(x),\R^{n+k})$. Then, since $V_i\to V$ we have
\[\begin{split}\d V(X)=&\int_{G_n(U)}\dvg_S X(y)\,dV(y, S)=\lim_{i\to\infty}\int_{G_n(U)}\dvg_S X(y)\,dV_i(y, S)
\\
=&\lim_{i\to\infty}\d V_i(X)\le \liminf_{i\to\infty}\left( K_i\r^\a\int_{G_n(U)} \|DX(y)\circ P_S\| dV_i(y,S) \right).\end{split}\]
Since also
\[\lim_{i\to\infty}\int_{G_n(U)} \|DX(y)\circ P_S\| dV_i(y,S)=\int_{G_n(U)} \|DX(y)\circ P_S\| dV(y,S),\]
we get
\[\d V(X)\le K\r^\a \int_{G_n(U)} \|DX(y)\circ P_S\| dV(y,S),\]
where $K = \liminf_{i\to \infty}K_i$.

To prove the density estimate we note that by Lemma \ref{monotonicityV} (i), applied to each $V_i$, with $\s\to 0$ we get
\[e^{C\r^\a}\o_n^{-1}\r^{-n}\mu_{V_i}(B_\r(x))\ge 1\]
for $\mu_{V_i}$-a.e. $x\in U$ and $\overline B_\rho(x) \subset U$, where $C=\frac{n+1}{\a} 2\sup_i K_i$. Hence, for $\mu_{V}$-a.e. $x\in U$ and a.e. $\r>0$ with $ B_\r(x) \subset U$ 
\[\o_n^{-1}\r^{-n}\mu_{V}(B_\r(x))=\lim_{i \to \infty} \o_n^{-1}\r^{-n}\mu_{V_i}(B_\r(x))\ge e^{-C\r^\a},\]
and by approximation from below
\[\o_n^{-1}\r^{-n}\mu_{V}(B_\r(x))\ge e^{-C\r^\a}\]
for every sufficiently small $\r>0$. Taking $\r\to 0$ we get the required estimate.
\end{proof}

Let $V$ be an $n$-varifold satisfying \eqref{varcondV} with some $K$ and let $x\in U$ be such that $\Theta^n(\mv,x)=\theta_0\in (0,\infty)$. Then, for a sequence $\l_i\downarrow 0$ the rescaled varifolds
\[V_i:=\eta_{x, \l_i\sharp}V\]
also satisfy \eqref{varcondV} with $K= K\l_i^\a$.
Furthermore,
\[\mu_{V_i}(W)=\l_i^{-n}\mu_V(\l_iW),\quad\text{for every set $W$ such that $\lambda_iW\subset \subset U$}.\]
Thus, by the monotonicity formula for $V$ and by compactness for Radon measures, we have that (after passing to a subsequence) $V_i$ converge to a varifold $C$, which is  stationary by Theorem \ref{semicont}. Now we can use the standard monotonicity formula for $C$ to infer that
\[\frac{\mu_C(B_\r(x))}{\o_n\r^n}=\theta_0\,\,,\,\,\,\forall \r>0.\]
More generally,
\[\l^{-n}\mu_C(\eta_{0,\l}(A))=\mu_C(A),\,\forall A\subset \R^{n+k},\l>0.\]
In case $\Theta^n(\mu_C, x)>0$ for $\mu_C$-a.e. $x$, e.g. when
\[\lim_{\r\downarrow 0}\r^{-n}\mv(\{ y\in B_\r(x):\Theta^n(\mv, y)<1\})=0,\]
we also have that
\[\eta_{0,\l\sharp} C=C,\]
i.e. $C$ is a cone.

The following rectifiability and compactness theorems are the analogues of \cite[Theorems 42.2, 42.7]{si1}.

\begin{theorem}[Rectifiability]\label{rectifiability}
Let $V$ be an $n$-varifold  with generalized normal  of class $C^{0,\alpha}$ in $U$ in the sense of \emph{Definition~\ref{C0a-normalV}}, and such that $\Theta^n(\mv,x)>0$ for $\mv$-a.e. $x\in U$. Then $V$ is an $n$-rectifiable varifold (i.e. \emph{$V=\var(M,\theta)$}).
\end{theorem}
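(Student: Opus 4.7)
The plan is to mimic the proof of Allard's rectifiability theorem as presented in \cite[\S 42]{si1}, with our monotonicity formula (Lemma~\ref{monotonicityV}), the density semi-continuity result (Lemma~\ref{semicontinuousTheta}) and the closure property (Theorem~\ref{semicont}) replacing the corresponding ingredients in the stationary or locally-bounded-first-variation setting. The key tangent-cone construction has essentially been carried out in the discussion immediately preceding the theorem: at any $x\in U$ with $\Theta^n(\mv,x)\in(0,\infty)$, every sequence $\l_i\downarrow 0$ admits a subsequence along which the rescaled varifolds $V_i:=\eta_{x,\l_i\sharp}V$ converge as Radon measures on $G_n(\R^{n+k})$ to some $C$. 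Since each $V_i$ satisfies \eqref{varcondV} with constant $K_i=K\l_i^\a\to 0$, Theorem~\ref{semicont} identifies $C$ as a \emph{stationary} $n$-varifold, and the preliminary discussion then shows that $C$ is a cone with $\Theta^n(\mu_C,0)=\Theta^n(\mv,x)>0$.

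The first step is to reduce the problem to one about measures that are absolutely continuous with respect to $\H^n$. Letting $\s\downarrow 0$ in Lemma~\ref{monotonicityV}(i) gives the pointwise upper bound
\[ \Theta^n(\mv,x)\le e^{K_0\r^\a}\r^{-n}\mv(B_\r(x)),\]
valid for $\mv$-a.e.\ $x$ and every admissible $\r$; combined with the hypothesis $\Theta^n(\mv,x)>0$ $\mv$-a.e.\ this shows $0<\Theta^n(\mv,\cdot)<\infty$ $\mv$-a.e., and the standard density comparison between $\mv$ and $\H^n$ yields that $\mv$ is $\s$-finite and absolutely continuous with respect to $\H^n$ on this set, which exhausts $\mv$-almost all of $U$.

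The second, and main, step is to show that at $\mv$-a.e.\ $x$ the tangent cone $C$ is supported on a single $n$-plane through the origin, with constant multiplicity $\th(x):=\Theta^n(\mv,x)$. Because $C$ is stationary, the classical monotonicity identity forces $\Theta^n(\mu_C,\cdot)$ to be constant along rays emanating from $0$, and a second blow-up of $C$ at a point $y\in\spt\mu_C\setminus\{0\}$ with $\Theta^n(\mu_C,y)=\Theta^n(\mu_C,0)$ produces a splitting $\R\times C''$, with $C''$ a stationary cone of one dimension less; iterating this splitting argument, exactly as in \cite[\S 42]{si1}, reduces $C$ to an $n$-plane with constant multiplicity. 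The $\mv$-a.e.\ existence of such a planar tangent varifold then upgrades to the existence of an approximate tangent $n$-plane for $\mv$, so that $\mv$ is concentrated on a countably $n$-rectifiable set $M$ by standard rectifiability criteria for Radon measures. Identifying the varifold tangent planes with their measure-theoretic counterparts $\mv$-a.e.\ finally gives $V=\var(M,\th)$.

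The principal obstacle is the iterated splitting argument of the second step. The crucial enabling fact is precisely Theorem~\ref{semicont}: because $K_i=K\l_i^\a$ vanishes in the blow-up limit, the $C^{0,\a}$ first-variation decay encoded in \eqref{varcondV} disappears and leaves behind a genuinely stationary $C$. All subsequent manipulations of $C$ --- constancy of $\Theta^n(\mu_C,\cdot)$ along rays, rigidity of the monotonicity identity, and iterated splitting --- then proceed as in the stationary theory, so no additional work beyond \cite[\S 42]{si1} is required at this stage.
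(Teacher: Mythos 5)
Your first step, and your identification of the blow-up limits as stationary varifolds via Theorem~\ref{semicont} together with the vanishing of $K_i=K\l_i^\a$, match the discussion preceding the theorem; but the core of your second step is wrong. A stationary $n$-varifold cone with positive density is \emph{not} in general an $n$-plane, and the iterated splitting argument cannot make it one: splitting at $y\in\spt\mu_C\setminus\{0\}$ requires $\Theta^n(\mu_C,y)=\Theta^n(\mu_C,0)$, and upper semicontinuity only gives ``$\le$'' there. For a non-planar stationary cone (a transverse union of two $n$-planes, or the Simons cone) the density at the vertex strictly exceeds the density at generic nearby points, so no such $y$ exists; and even when splitting points do exist the iteration terminates at a lower-dimensional non-planar cross-section (e.g.\ four rays), not at a line. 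Federer-type dimension reduction bounds the dimension of a bad set; it does not planarize cones. Note that your argument uses nothing about the base point $x$ beyond $0<\Theta^n(\mv,x)<\infty$, so if it worked it would show that \emph{every} such point has a planar tangent cone --- already false for the varifold of two transverse planes at their intersection. Any correct proof must exploit that the conclusion is only claimed $\mv$-a.e., i.e.\ it must use Lebesgue/density-point information at $x$, in particular about the disintegration of $V$ over $\mu_V$ in the Grassmannian variable, which you never invoke. Two further elisions: a planar tangent varifold along \emph{one} sequence $\l_i\downarrow 0$ does not yield an approximate tangent plane for $\mv$ (one needs the full limit, or flatness of \emph{all} tangent measures plus the Marstrand--Mattila criterion); and dilation invariance of the tangent \emph{varifold} $C$ (as opposed to the measure $\mu_C$) requires the extra density hypothesis the paper is careful to flag before the theorem.

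The paper's own proof is a reduction, not a blow-up analysis: it invokes the proof of the rectifiability theorem for varifolds of locally bounded first variation \cite[Theorem 42.2]{si1}, observing that the hypothesis $\|\d V\|(B_\r(x))\le\Lambda(x)\mv(B_\r(x))$ enters that proof only to furnish the monotonicity formula at $\mv$-a.e.\ $x$, whereas Lemma~\ref{monotonicityV} supplies it at every point of $U$. If you want a self-contained argument, you should reproduce that proof rather than a dimension-reduction scheme.
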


{\it Remark on the proof.} In the case when $V$ has locally bounded first variation one shows that $V$ has an approximate tangent space at each point $x$ where
\[\| \delta V\|(B_\r(x)) \leq \Lambda(x) \mu_V(B_\r(x)),\quad\text{for all $B_\r(x) \subset U$.}\]
This condition is used only to show that the monotonicity formula holds at the point $x$. In our case the monotonicity formula holds at every point $x \in U$, and thus the same proof goes through.

\begin{theorem}[Compactness]\label{Compactness}
Let $\{V_i\}$ be a sequence of rectifiable $n$-varifolds with generalized normal of class $C^{0,\alpha}$ in $U$ in the sense of \emph{Definition~\ref{C0a-normalV}} with constants $K_i\ge 0$ and such that
\[\sup_i\{\mu_{V_i}(W)\}<\infty \,\,,\,\,\,\forall W\subset\subset U\text{   ,    }\sup_i K_i<\infty,\]
and $\Theta^n(\mu_{V_i}, x)\ge 1$ for $\mu_{V_i}$-a.e. $x \in U$. 

Then, there exist a subsequence $\{V_{i'}\}$ and a rectifiable $n$-varifold $V$  that has generalized normal of class $C^{0,\alpha}$ in $U$ in the sense of \emph{Definition~\ref{C0a-normal}} with $K= \liminf_{i\to \infty} K_i$, such that $V_{i'} \to V$ (in the sense of Radon measure on $G_n(U)$).
The density satisfies $\Theta^n(\mv, x)\ge 1$ for $\mv$-a.e. $x\in U$.

Moreover, when the $V_i$'s are integer multiplicity then so is $V$.
\end{theorem}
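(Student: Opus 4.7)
The plan is to assemble this result directly from the three preparatory theorems of this section, together with the classical Allard compactness theorem for the integer multiplicity statement. First I would extract a subsequential weak limit: since $G(n+k,n)$ is compact and $\sup_i\mu_{V_i}(W)<\infty$ for every $W\subset\subset U$, the Radon measures $V_i$ on $G_n(U)$ are locally uniformly bounded, so by the standard compactness of Radon measures a subsequence $\{V_{i'}\}$ converges to some Radon measure $V$ on $G_n(U)$, which is by definition a general $n$-varifold.

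Next I would apply Theorem~\ref{semicont} to this subsequence. The hypotheses $\sup_{i'}K_{i'}<\infty$ and $\Theta^n(V_{i'},\cdot)\geq 1$ $\mu_{V_{i'}}$-a.e.\ are in force by assumption, so the theorem yields both that $V$ has generalized normal of class $C^{0,\alpha}$ with $K=\liminf_{i'\to\infty}K_{i'}\leq\sup_i K_i<\infty$, and that $\Theta^n(\mu_V,\cdot)\geq 1$ $\mu_V$-a.e. In particular $\Theta^n(\mu_V,\cdot)>0$ $\mu_V$-a.e., so Theorem~\ref{rectifiability} is directly applicable and yields that $V$ is rectifiable, $V=\var(M,\theta)$.

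For the integer multiplicity statement, I would invoke the classical Allard compactness theorem for integer rectifiable varifolds (cf.\ \cite[Remark 42.8]{si1}). The decay condition \eqref{varcondV}, applied with $X\in C_c^1(B_\rho(x),\R^{n+k})$ for any ball with $\overline B_\rho(x)\subset U$, gives $|\delta V_i(X)|\leq K_i\rho^\alpha\|DX\|_\infty\mu_{V_i}(B_\rho(x))$; a finite covering of any $W\subset\subset U$ by such balls, together with a subordinate partition of unity, then yields $\sup_i\|\delta V_i\|(W)<\infty$. Combined with the local mass bound and the integer rectifiability of each $V_i$, this is exactly the input required by the classical theorem, which forces the weak limit $V$ to be integer multiplicity.

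There is no genuine analytical obstacle here: the argument is essentially a bookkeeping exercise once the monotonicity, rectifiability, and semicontinuity results for varifolds satisfying \eqref{varcondV} are in hand. The only ingredient reaching outside this section is the integer rectifiability of the limit in the last step, but this classical argument depends only on locally bounded mass and first variation, both of which are automatic consequences of the hypotheses.
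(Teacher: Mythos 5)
The first three steps of your argument (Radon measure compactness for the subsequence, Theorem~\ref{semicont} for the decay condition and the density lower bound, Theorem~\ref{rectifiability} for rectifiability) are exactly the paper's route and are fine.

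The final step, however, contains a genuine gap. You claim that \eqref{varcondV} yields $\sup_i\|\delta V_i\|(W)<\infty$ and hence that the classical integral compactness theorem applies. But the estimate you derive is
\[
|\delta V_i(X)|\le K_i\rho^\alpha\,\|DX\|_\infty\,\mu_{V_i}(B_\rho(x)),
\]
i.e.\ a bound in terms of $\|DX\|_\infty$, whereas the total variation $\|\delta V_i\|(W)$ is by definition the supremum of $\delta V_i(X)$ over $X\in C^1_c(W)$ with $\sup|X|\le 1$ --- a class on which $\|DX\|_\infty$ is unbounded. Condition \eqref{varcondV} therefore does \emph{not} imply locally bounded first variation; indeed the whole point of the paper is that this hypothesis is strictly weaker (a $C^{1,\alpha}$ graph generally fails to have $\|\delta V\|$ a Radon measure, since that would force the normal to be of class BV rather than merely H\"older). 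Consequently the hypotheses of Allard's compactness theorem are not verified and it cannot be invoked as a black box.

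The paper instead establishes integrality of $\theta$ by a blow-up argument adapted to the present setting: at $\mu_V$-a.e.\ $\xi$ the tangent varifold of $V$ is $\theta_0\,\var(P)$ with $P=T_\xi M$, and a diagonal selection produces $W_i:=\eta_{\xi,\lambda_i\sharp}V_i\to\theta_0\,\var(P)$ where each $W_i$ satisfies \eqref{varcondV} with constant $\lambda_i^\alpha K_i\to 0$. The rescaled varifolds are thus ``asymptotically stationary'' in the sense of this section, the monotonicity formula of Lemma~\ref{monotonicityV} applies to them with vanishing error, and only then does one run the sheet-counting argument of \cite[Remark 42.8]{si1} to conclude that $\theta_0\in\Z$. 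To repair your proof you would need to insert this rescaling step (or otherwise produce an argument for integrality that does not presuppose locally bounded first variation).
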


{\it Remark on the proof.} The fact that a subsequence converges and that the limit $V$ that has the properties stated in the theorem follows from the compactness theorem for Radon measures, Theorem \ref{semicont} and Theorem \ref{rectifiability}. To show that $V$ is integer multiplicity when the $V_i$'s are, we note the following.

\[\eta_{\xi, \l\sharp}V\stackrel{\l\to0}{\longrightarrow} \theta_0\var(P), \quad\text{for $\mv$-a.e. $\xi\in\spt \mv$, where $P=T_\xi M, \theta_0=\theta(\xi)$.}\]
Since also 
\[V_i \stackrel{i\to \infty}{\longrightarrow} V\implies \eta_{\xi, \l\sharp}V_i \stackrel{i\to\infty}{\longrightarrow} \eta_{\xi, \l\sharp}V\]
we can get a sequence $W_i:=\eta_{\xi, \l_i\sharp}V_i$, with $\l_i\downarrow 0$, so that
\[W_i\to  \theta_0\var(P).\]
Note also that $W_i$ satisfy \eqref{varcond} with $\l_i^\a K_i\le \l_i^\a\sup_i|K_i|\stackrel{i\to\infty}{\longrightarrow }0$. One then proceeds as in \cite[Remark 42.8]{si1} to show that $\theta_0$ needs to be an integer.

\section{Application to the prescribed mean curvature equation}\label{applications}

In this section we use Theorems~\ref{main} and \ref{mainb} to show that the graph of a function satisfying the prescribed mean curvature equation over a $C^{1,\a}$ domain and with $C^{1,\a}$ prescribed boundary values is a $C^{1,\a}$ manifold with boundary provided that the prescribed mean curvature is given as the divergence of a $C^{0,\a}$ vector field. More precisely, we consider the following situation.

Let $\Omega$ be an open $C^{1,\a}$ domain in $\R^n$, $H\in L_{\loc}^{p}(\ov\Omega\times\R)$, where $p>n+1$, and let $f=(f^1,f^2,\dots, f^n)\in C_{\loc}^{0,\a}(\ov\Omega\times\R;\R^n)$. We consider a weak solution $u\in C_\loc^{1,\a}(\Omega)\cap C_\loc^0(\ov\Omega)$ of the following Dirichlet problem
\[\text{(DP)}\left\{\,\,\,\begin{split}\sum_{i=1}^n D_i\left( \frac{D_i u}{\sqrt{1+|Du|^2}}\right)=&H(x, u(x))+\sum_{i=1}^nD_i(f^i(x, u(x)))\text{    in    }\Omega\\
u=&\phi \text{    on    }\partial\Omega,\end{split}\right.\]
where $\phi\in C^{0,\a}_\loc(\partial\Omega)$.
\begin{theorem}\label{DPthm} Let $u\in C_\loc^{1,\a}(\Omega)\cap C_\loc^0(\ov\Omega)$ be a weak solution of the Dirichlet problem (DP), with $\Omega, H, f, \phi$ as above. Then for any $\eta>0$, there exists $\r_0=\r_0(\eta)$ such that the following holds: for any $\r\le \r_0$ and any $x\in \graph u$ there exist a linear isometry $q$ of $\R^{n+1}$ and a function $\psi_x \in C^{1,\a}( U\cap B_\r(x))$, for a $C^{1,\a}$ domain $U$ of $\R^n$, such that
\[ \graph u\cap B_\r(x)=q(\graph \psi_x)\cap B_\r(x),\]
and
\[\r^{-1}\|\psi_x\|_{C^0(U\cap B_\r(x))}+\|D\psi_x\|_{C^0(U\cap B_\r(x))}+\r^{\a}[D\psi_x]_{\a, U\cap B_\r(x)}\le \eta.\]
\end{theorem}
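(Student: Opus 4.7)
The plan is to apply Theorems~\ref{main} and \ref{mainb} to the rectifiable $n$-varifold $V=\var(\graph u,1)\subset\R^{n+1}$ with boundary $B=\{(x,\phi(x)):x\in\partial\Omega\}$, at interior and boundary points of $\graph u$ respectively. (We read the hypothesis on the boundary data $\phi$ as $C^{1,\alpha}$, consistent with the required $C^{1,\alpha}$ geometry of $B$ and with the $C^{1,\alpha}$ conclusion.)

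Interior points reduce immediately to the given regularity: for $x\in\graph u$ bounded away from $B$, since $u\in C^{1,\alpha}_{\loc}(\Omega)$, we choose $q$ to align $\R^n$ with $T_x\graph u$, producing a local graph function $\psi_x$ with $\psi_x(0)=0$ and $D\psi_x(0)=0$. The three smallness quantities in the conclusion are then controlled for $\rho$ small by a uniform $C^{0,\alpha}$-bound for $D\psi_x$ on a fixed neighborhood of $x$: one has $\rho^\alpha[D\psi_x]_\alpha\le\rho^\alpha C$, and since $D\psi_x(0)=0$ and $\psi_x(0)=0$, the other two terms are controlled by $\rho^\alpha[D\psi_x]_\alpha$.

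The substantive case is $x\in\graph u\cap B$, where we invoke Theorem~\ref{mainb}. The key step is to show that $V$ has generalized normal of class $C^{0,\alpha}$ in the sense of Definition~\ref{C0a-normal} in a neighborhood of $x$ relative to $B$. Given a test field $X\in C_c^1(B_\rho(x_0),\R^{n+1})$ with $X\equiv 0$ on $B$, I would rewrite $\delta V(X)$ through the graph parameterization $F(x)=(x,u(x))$, decomposing $X$ into its tangential and normal parts along $M=\graph u$. The tangential contribution vanishes by the divergence theorem on the $C^{1,\alpha}$ manifold $M\setminus B$ (arranged so that the tangent part is compactly supported in the smooth portion of $M$, which is possible since $X=0$ on $B\supset\partial M$). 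The normal part reduces, via the graph parameterization, to testing the weak form of (DP) against a scalar $\psi\in C_c^1(\Omega)$; inserting the constant $f(x_0,u(x_0))$ (permissible since $\psi$ has compact support in $\Omega$), one obtains
\[
\delta V(X)\;\le\;c\,\Bigl|\int_\Omega\psi\,H(x,u(x))\,dx\Bigr|\;+\;c\,[f]_{0,\alpha}\rho^\alpha\int_\Omega|D\psi|\,dx.
\]
The first term is bounded by $K\rho^\alpha\int_M\|d^MX\|d\H^n$ exactly as in the paper's argument ``\emph{Generalized mean curvature in $L^p$ implies generalized normal of class $C^{0,\alpha}$}'' via H\"older and the Michael--Simon Sobolev inequality (the hypothesis $p>n+1$ accommodates the $\sqrt{1+|Du|^2}$ graph volume factor). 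The second term is immediately of the required form since $\int_\Omega|D\psi|\,dx\lesssim\int_M\|d^MX\|\,d\H^n$ by the graph relation between $\psi$ and $X$.

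The remaining hypotheses of Theorem~\ref{mainb} are the H\"older control $\kappa\rho^\alpha\le\delta$ on $TB$ (automatic for small $\rho$ from $\partial\Omega,\phi\in C^{1,\alpha}$) and the density bound $\omega_n^{-1}\rho^{-n}\mu_V(B_\rho(x))\le 1+\delta/2$. Once these are verified, Theorem~\ref{mainb} yields the $C^{1,\alpha}$ graph representation at scale $\gamma\rho$ with scaling invariant $C^{1,\alpha}$ estimates depending only on $n,\alpha,\delta$, and the smallness $\eta$ is achieved by choosing $\rho_0(\eta)$ small enough that the hypotheses hold with a sufficiently small $\delta$. \textbf{The main obstacle is the density bound at boundary points:} since only $u\in C^0(\ov\Omega)$ is assumed, a priori $|Du|$ may blow up as one approaches $\partial\Omega$ and the area of $\graph u\cap B_\rho(x)$ is not immediately controlled. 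The standard remedy is a barrier/comparison argument for the prescribed mean curvature equation using the $C^{1,\alpha}$ regularity of $\partial\Omega$ and $\phi$, which yields a uniform gradient bound for $u$ in a neighborhood of $\partial\Omega$ and hence the required area estimate. A secondary technical point is the rigorous justification of the tangent/normal decomposition of $X$ when $M$ is only $C^{1,\alpha}$ near $B$, handled by a standard smoothing/approximation argument.
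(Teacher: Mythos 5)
Your overall strategy --- verify condition \eqref{varcond} for $V=\var(\graph u,1)$ and then invoke Theorem~\ref{main} in the interior and Theorem~\ref{mainb} at the boundary --- is the same as the paper's, and your verification of the generalized-normal condition (freezing $f$ at the center point, handling the $H$-term via H\"older and Michael--Simon exactly as in the section ``Generalized mean curvature in $L^p$ implies generalized normal of class $C^{0,\alpha}$'') is in order. The genuine gap is precisely where you flag it: the area-ratio hypotheses of \eqref{hyp} and \eqref{hyp1}. Your proposed remedy --- a barrier/comparison argument yielding a uniform gradient bound for $u$ near $\partial\Omega$ --- is not available in this setting: with $H$ merely in $L^p_{\loc}$, $\partial\Omega$ and $\phi$ merely $C^{1,\a}$, and $u$ only assumed continuous up to the boundary, one has no supersolution barriers of the required type, and a uniform boundary gradient bound would essentially presuppose the boundary regularity being proved. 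The paper instead obtains the density bounds from an \emph{almost minimizing property} of $M=\graph u$, which follows from the variational structure of (DP): for every integral $n$-current $N$ with $\partial N=[\![\graph\phi]\!]$ and $\spt(N-[\![M]\!])$ compactly contained in $B_\r(x)\cap(\ov\Omega\times\R)$,
\[
\area(M\cap B_\r(x))\le \area(N\cap B_\r(x))+c\,\o_n\r^n\left(\r^{1-\frac{n+1}{p}}\|H\|_{L^p(B_\r(x))}+\r^\a[f]_{\a,B_\r(x)}\right).
\]
From this comparison property one deduces, by standard arguments, that for every $\e>0$ there is a uniform $\r_1(\e)$ with $|M\cap B_\r(x)|\le\o_n\r^n(1+\e)$ when $\dist(x,\graph\phi)>\r$ and $|M\cap B_\r(x)|\le\o_n\r^n(\tfrac12+\e)$ when $x\in\graph\phi$. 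This step, not a gradient bound, is the missing ingredient; without it the hypotheses of Theorems~\ref{main} and~\ref{mainb} cannot be verified.

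A secondary point: your treatment of interior points only covers $x$ at a fixed positive distance from $B$, where $u\in C^{1,\a}_{\loc}(\Omega)$ indeed gives the estimate but with a radius degenerating as $x$ approaches $\partial\Omega\times\R$. To obtain a single $\r_0(\eta)$ valid for \emph{all} $x\in\graph u$ one must run the interior Allard-type theorem (Theorem~\ref{main}, using the interior density bound above) at points with $\dist(x,\graph\phi)>\r$, and for the remaining points apply the boundary theorem on a ball of comparable radius centred at a nearby point of $\graph\phi$; interior Schauder-type regularity of $u$ alone does not yield the uniformity claimed in the statement.
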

\begin{remark} The radius $\r_0$ given in Theorem~\ref{DPthm} of course also depends on $\partial\Omega, H, f$ and $\phi$, but the dependence is on their corresponding $C_\loc^{1,\a}$, $L_\loc^p$, $C_\loc^{0,\a}$ and $C_\loc^{1,\a}$ norms. Furthermore this is a local estimate, that means that if the above norms are bounded only in $\Omega'\times \R$, where $\Omega'\subset \Omega$, then the Theorem still holds for all $x\in (\Omega'\times\R)\cap \graph u$.
\end{remark}

\begin{proof}[Proof of Theorem \ref{DPthm}]

Let $M=\graph u$, then $M$ satisfies an almost minimizing property, in particular, there exists $\r_0$ depending only on the $C^{0,\a}$ norm of $f$ such that for any $x\in \ov\Omega\times\R$ and any $\r\le \r_0$ the following holds:
\[\area(M\cap B_\r(x))\le \area (N\cap B_\r(x))+ c\o_n\r^n(\r^{1-\frac{n+1}{p}}\|H\|_{L^p(B_\r(x))}+\r^\a[f]_{\a, B_\r(x)})\]
for any integral $n$-current $N$ with $\partial N=[\![\graph\phi]\!]$ and $\spt (N- [\![M]\!])$ a compact subset of $B_\r(x)\cap (\ov \Omega\times\R)$, and where $c$ in the above inequality is an absolute constant
(cf. \cite[Lemma 2.9, 2.10]{bourni}). Here we used the notation $[\![\graph\phi]\!], [\![M]\!]$ to refer to the $n$-current corresponding to the manifolds $\graph\phi$ and $M$, respectively.

This almost minimizing property implies that for any $\e>0$ there exists a $\r_1=\r_1(\e)>0$ such that for all $\r\le\r_1$
\[\begin{split}
|M\cap B_\r(x)|\le& \o_n\r^n(1+\e)\text{   for all  }x\in \graph u\text{  such that  }\dist(x,\graph\phi)>\r \\
|M\cap B_\r(x)|\le& \o_n\r^n\left(\frac12+\e\right)\text{   for all  }x\in \graph\phi.
\end{split}\]
For the above estimate at points far away from $\partial \Omega \times \R$ see for example \cite{Massari-Miranda:1984}. And for the estimates at the remaining points see \cite[Theorem 3.12, Lemma 2.12]{bourni}.

Applying now Theorems \ref{main} and \ref{mainb} for the interior and the boundary respectively gives the result.

\end{proof}

\section{Final remarks}
One may also consider the following more general situation than condition \eqref{varcond}.
\begin{definition}\label{modulus}
Let $U$ be an open subset of $\R^{n+k}$ and let \em{$V=\var(M,\theta)$} be a rectifiable $n$-varifold in $U$.
We say that $V$ has {\em generalized normal with modulus of continuity $\o$ in $U$ } for a nondecreasing function $\o: (0,\infty) \to (0,\infty)$ with $\lim_{\\r \to 0} \o(\r)=0$ if for all $B_\r(x) \subset U$ and all $X \in C_c^1(B_\r(x),\R^{n+k})$
\[
\d V(X)\le \o(\r) \int_M \|d^MX\|\dmv.
\]

It would be interesting to consider the regularity properties of these varifolds $V=\var(M,\theta)$ under hypotheses \eqref{hyp}. In view of the corresponding results for quasi-minimizers of perimeter (cf. \cite{Ambrosio-Paolini:1999}) it is reasonable to expect local $\alpha$-H\"older continuity of $\spt V$ for all $\alpha<1$.
\end{definition}

\bibliographystyle{plain}

\bibliography{bill}

\begin{thebibliography}{1}

\bibitem{al2}
W.~K. Allard.
\newblock On the first variation of a varifold: Boundary behavior.
\newblock {\em Ann. of Math.}, 101(2):418--446, 1975.
\newblock MR0397520, Zbl 0319.49026.

\bibitem{al1}
William~K. Allard.
\newblock On the first variation of a varifold.
\newblock {\em Ann. of Math. (2)}, 95:417--491, 1972.

\bibitem{Ambrosio-Paolini:1999}
Luigi Ambrosio and Emanuele Paolini.
\newblock Partial regularity for quasi minimizers of perimeter.
\newblock {\em Ricerche Mat.}, 48(suppl.):167--186, 1999.
\newblock Papers in memory of Ennio De Giorgi (Italian).

\bibitem{bourni2}
Theodora Bourni.
\newblock Allard type boundary regularity theorem for varifolds with
  {$C^{1,\alpha}$} boundary.
\newblock Preprint, arXiv:1008.4728.

\bibitem{bourni}
Theodora Bourni.
\newblock {$C^{1,\alpha}$} theory for the prescribed mean curvature equation
  with {D}irichlet data.
\newblock {\em J. Geom. Anal.}, 21(4):982--1035, 2011.

\bibitem{Massari-Miranda:1984}
Umberto Massari and Mario Miranda.
\newblock {\em Minimal surfaces of codimension one}, volume~91 of {\em
  North-Holland Mathematics Studies}.
\newblock North-Holland Publishing Co., Amsterdam, 1984.
\newblock Notas de Matem{\'a}tica [Mathematical Notes], 95.

\bibitem{schaetzle}
R.~Sch\"atzle.
\newblock Geometrische {M}a\ss theorie.
\newblock Vorlesungsskript, 2008.

\bibitem{si1}
L.~Simon.
\newblock Lectures on geometric measure theory.
\newblock In {\em Proceedings of the Center for Mathematical Analysis},
  volume~3, Canberra, Australia, 1983. Australian National University.
\newblock MR0756417, Zbl 546.49019.

\end{thebibliography}

\end{document}